\newtheorem{theorem}{Theorem}
\newtheorem{remark}{Remark}
\newtheorem{corollary}{Corollary}
\newtheorem{example}{Example}
\newtheorem{definition}{Definition}
\newtheorem{proposition}{Proposition}
\newtheorem{lemma}{Lemma}
\def\be{\begin{equation}}
\def\ee{\end{equation}}
\def\ba{\begin{eqnarray}}
\def\ea{\end{eqnarray}}
\newcommand{\cL}{\mathcal{L}}
\newcommand{\cO}{\mathcal{O}}
\newcommand{\cH}{\mathcal{H}}
\newcommand{\X}{\mathcal{X}}
\DeclareMathOperator*{\argmin}{argmin}
\newcommand{\R}{\mathbb{R}}
\newcommand{\e}{\mathrm{e}}
\newcommand{\ttt}{\tiny\text}
\newcommand{\norm}[1]{\left\lVert#1\right\rVert}
\begin{document}

%\title{Dynamical Similarities, Contact Reductions and Evolving Couplings}
\title{Bregman dynamics, contact transformations and convex optimization}
%\todo{check the other titles commented in the .tex}

\author{Alessandro Bravetti}
\email{alessandro.bravetti@iimas.unam.mx} 
\affiliation{Instituto de Investigaciones en Matem\'aticas Aplicadas y en Sistemas, 
Universidad Nacional Aut\'onoma de M\'exico, A.~P.~70543, M\'exico, DF 04510, Mexico}
\author{Maria L. Daza-Torres}
\email{mdazatorres@ucdavis.edu} 
\affiliation{Department of Public Health Sciences, 
University of California Davis, California 95616, United States}

\author{Hugo Flores-Arguedas}
\email{hfloresarguedas@astate.edu}
\affiliation{Arkansas State University, Campus Quer\'etaro (ASUCQ)}

\author{Michael Betancourt}
\email{betan@symplectomorphic.com}
\affiliation{Symplectomorphic LLC, New York, USA}

\begin{abstract}
Recent research on accelerated gradient methods of use in optimization 
has demonstrated that these methods can 
be derived as discretizations of dynamical systems. 
This, in turn, has provided 
a basis for more systematic investigations, especially into the geometric structure 
of those dynamical systems and their structure--preserving discretizations.   
In this work, we introduce dynamical systems defined through a \emph{contact geometry} which 
are not only naturally suited to the optimization goal but also subsume all 
previous methods based on geometric dynamical systems.  
As a consequence, all the deterministic flows used in optimization share an extremely 
interesting geometric property: they are invariant under contact transformations.
In our main result, we exploit this observation to show that the celebrated Bregman 
Hamiltonian system can always be transformed
into an equivalent but separable Hamiltonian by means of a contact transformation.
This in turn enables the development of fast and robust discretizations through geometric \emph{contact 
splitting integrators}.  
As an illustration, we propose the Relativistic Bregman algorithm, and show in  
some paradigmatic examples that it compares favorably with respect to standard optimization
algorithms such as classical momentum and Nesterov's accelerated gradient.
\end{abstract}

\maketitle

%\keywords{Hamiltonian Mechanics, scaling symmetries contact geometry}

\section{Introduction}

Despite their practical utility and explicit convergence bounds, accelerated gradient 
methods have long been difficult to motivate from a fundamental theory.  This lack of 
understanding limits {the theoretical foundations of the methods, which in turn hinders  
the development of new and more principled schemes}.  
Recently a progression
of work has studied the continuum limit of accelerated gradient methods, demonstrating 
that these methods can be derived as discretizations of continuous dynamical systems. 
Shifting the focus to the structure and discretization of these latent dynamical systems 
provides a foundation for the systematic development and implementation of new 
accelerated gradient methods.

This recent direction of research began with~\cite{su2014differential}, which found a 
continuum limit of \emph{Nesterov's accelerated gradient} method (NAG)
%
%\textcolor{red}{
\begin{align}
X_{k}&=P_{k-1}-\tau\nabla f(P_{k-1})\label{eq:NAG1}
\\
P_{k}&=X_k+\frac{k-1}{k+2}(X_k-X_{k-1})\,,\label{eq:NAG2}
\end{align}
%}
%
by discretizing the ordinary differential equation
\begin{equation}\label{eq:DiffEqNAG}
	\ddot X+\frac{3}{t}\dot X+\nabla f(X)=0\,,
\end{equation}
for $t>0$ with the initial conditions $X(0)=X_0$ and $\dot X(0)=0$.  By generalizing
the ordinary differential equation~\eqref{eq:DiffEqNAG} they were then able to  
derive similar accelerated gradient methods that achieved comparable convergence rates.

Later, in~\cite{wibisono2016variational} the authors found what is arguably still the most important development 
in this direction, by showing that accelerated methods can
also be derived as discretizations of a more structured family of \emph{variational} dynamical 
systems, specified with a time--dependent Lagrangian function, or equivalent Hamiltonian function.
Consider an objective function $f: \X\rightarrow \mathbb R$, which is continuously 
differentiable, convex, and has a unique minimizer $X^*\in \X$.  Moreover 
assume that $\X$ is a convex set endowed with a distance--generating function 
$h:\X\rightarrow \R$ that is also convex and essentially smooth.  From the 
\emph{Bregman divergence} induced by $h$,
\begin{equation}\label{eq:BD}
	D_h(Y,X)=h(Y)-h(X)-\langle\nabla h(X),Y-X\rangle
\end{equation}
they derived the \emph{Bregman Lagrangian}
\begin{equation}\label{eq:BL}
	L_{\ttt{Br}}(X,V,t)=\e^{\alpha+\gamma}\left(D_h(X+\e^{-\alpha}V,X)-\e^{\beta}f(X)\right)\,,
\end{equation}
where $\alpha$, $\beta$, and $\gamma$ are continuously differentiable functions 
of time.  
They then proved that under the \emph{ideal scaling conditions} 
\begin{align}
	\dot{\beta}&\leq \e^{\alpha}\label{eq:IS1}\\
	\dot \gamma&=\e^{\alpha}\label{eq:IS2}\,,
\end{align}
the solutions of the resulting Euler--Lagrange equations
\begin{equation}\label{eq:BEL}
	\ddot X+\left(\e^{\alpha}-\dot\alpha\right)\dot X+\e^{2\alpha+\beta}\left[\nabla^2h(X+\e^{-\alpha}\dot X)\right]^{-1}\nabla f(X)=0
\end{equation}
satisfy~\cite[Theorem 1.1]{wibisono2016variational}
\begin{equation}\label{eq:ExpConv}
	f(X)-f(X^*)\leq \cO(\e^{-\beta})\,.
\end{equation}

From a {physical} perspective the two terms in equation~\eqref{eq:BL} play the 
role of a kinetic and a potential energies, respectively.  At the same time the explicit 
time--dependence of the Lagrangian~\eqref{eq:BL} is a necessary ingredient in order 
for the dynamical system to dissipate energy and relax to a minimum of the potential,
and hence to a minimum of the objective function.  Moreover, by~\eqref{eq:IS1}, the 
optimal convergence rate is achieved by choosing $\dot\beta=\e^{\alpha}$, 
i.e.~$\beta=\int_{t_0}^t\e^{\alpha(s)}ds$, and we observe that in the Euclidean case, 
$h(X)=\frac{1}{2}\norm{X}^2$, the Hessian is the identity matrix and 
thus~\eqref{eq:BEL} simplifies to
\begin{equation}\label{eq:BELEuclidean}
	\ddot X+\left(\e^{\alpha}-\dot\alpha\right)\dot X+\e^{2\alpha+\beta}\nabla f(X)=0\,.
\end{equation}
Finally the authors developed a heuristic discretization of~\eqref{eq:BEL} that yielded 
optimization algorithms matching the continuous convergence rates.
However, these algorithms are of no practical use, due to the extremely high cost of their implementation.

In~\cite{betancourt2018symplectic} the authors considered more systematic discretizations of these
variational dynamical systems that exploited the fact that they are well suited for 
numerical discretizations that preserve their geometric 
structure~\cite{hairer2006geometric}.  
%In particular they Legendre transformed the 
%Bregman Lagrangian~\eqref{eq:BL} to derive the \emph{Bregman Hamiltonian},
%
%\begin{equation}\label{eq:BH}
%	H_{\ttt{Br}}(X,P,t)=\e^{\alpha+\gamma}\left(D_{h^*}(\e^{-\gamma}P+\nabla h(X),\nabla h(X))+\e^{\beta}f(X)\right)\,,
%	\end{equation}
	%
%where $h^*(P):=\sup_V\langle P, V\rangle-h(X)$ is the Legendre transform of $h(X)$, and then
%argued that a principled way to obtain reliable and rate--matching discretizations 
%of the resulting dynamical system
%
%\begin{align}
%	\dot X&=\nabla_PH_{\ttt{Br}}(X,P,t)\label{eq:H1}\\
%	\dot P&=-\nabla_XH_{\ttt{Br}}(X,P,t)\label{eq:H2}\,,
%\end{align}
%
%is with \emph{symplectic integrators}.  
In particular they considered the 
associated \emph{Bregman Hamiltonian}~\cite{wibisono2016variational},
\begin{equation}\label{eq:BH}
	H_{\ttt{Br}}(X,P,t)=\e^{\alpha+\gamma}\left(D_{h^*}(\e^{-\gamma}P+\nabla h(X),\nabla h(X))+\e^{\beta}f(X)\right)\,,
	\end{equation}
where $h^*(P):=\sup_V\langle P, V\rangle-h(X)$ is the Legendre transform of $h(X)$, and then
argued that a principled way to obtain reliable and rate--matching discretizations 
of the resulting dynamical system
\begin{align}
	\dot X&=\nabla_PH_{\ttt{Br}}(X,P,t)\label{eq:H1}\\
	\dot P&=-\nabla_XH_{\ttt{Br}}(X,P,t)\label{eq:H2}\,,
\end{align}
is with \emph{pre--symplectic integrators} in an extended phase space where $t$ and its associated momentum $E$ are
added as dynamical variables.
They numerically demonstrated in the Euclidean (i.e.,~separable) case that a standard leapfrog integrator 
yields an optimization algorithm that achieves polynomial convergence rates and showed how the introduction 
of a gradient flow could achieve late--time exponential convergence rates matching those seen empirically in 
other accelerated gradient methods.

A more theoretical approach to rate--matching geometric discretizations for the Bregman dynamics has been then proposed in~\cite{francca2020dissipative}, 
where the authors prove that pre--symplectic integrators provide a principled way to discretize Bregman dynamics while preserving the continuous rates of convergence 
up to a negligible error, provided some assumptions are met.
The crucial significance of this result lies in the fact that it implies that splitting integrators automatically yield ``rate--matching'' algorithms, without the need for a discrete convergence analysis.
Ideally, this one would like to be able to directly apply splitting integrators to the general Bregman Hamiltonian. Unfortunately, however, when applied to the general (non--separable) Bregman dynamics, these splitting 
methods yield implicit updates that increase the computational burden and affect the numerical stability.
As a first attempt to remedy this problem, in~\cite{francca2020dissipative} the authors used a strategy that involves doubling the phase space dimension. As we will comment in the following, this strategy is not completely satisfying (see also the discussion in~\cite{duruisseaux2021adaptive}). 
%In particular, the methods proposed in~\cite{duruisseaux2021adaptive}, which are 
Other attempts have been proposed later in~\cite{duruisseaux2021adaptive,campos2021discrete}, based respectively on the so--called \emph{Poincar\'e transform} and the \emph{adaptive Hamiltonian variational integrators} in the first case and on the \emph{cosymplectic geometry} and the associated discrete variational methods for time--dependent Lagrangians in the second. Although both perspectives seem to provide a robust solution to the problem, they are rather sophisticated compared to the simplicity of the splitting mechanism, and in particular, it is not clear how to adapt the fundamental results from~\cite{francca2020dissipative} to this setting.

Therefore, despite much investigation, there is still a crucial question about the Bregman dynamics
that is left open: 
\begin{center}
    is it possible to find an explicit splitting integrator for the general (non--separable)
Bregman Hamiltonian?
\end{center}
Note that, from all the above discussion, it would seem that the answer to this question is negative.
%However, it is the main result of this work the proof of the surprising result that this is not so
%(see Theorem~\ref{th:Kbr}).
%The rationale behind this result is very simple: being time-dependent symplectic Hamiltonian systems, the Bregman
%dynamics are particular cases of the more general \emph{contact Hamiltonian systems}. 
%This implies that they are invariant
%under a group of transformations called \emph{contact transformations}, which incluce the standard
%symplectic (canonical) transformations.
%By applying a particular contact transformation
%we will then show that the Bregman Hamiltonian is indeed always separable. 
%Finally. we remark that the 
%needed contact transformation is not symplectic, and therefore such result does not hold within the standard 
%symplectic picture, consistently with the above discussion.
However, as we will see, a proper geometric approach will reveal otherwise. To understand this result and to provide the complete general picture, it is convenient to first conclude our brief account of the geometric approaches to the construction of dynamical systems that can be used in optimization: it turns out that we can replace variational dynamical systems that exploit heuristic time--dependencies to achieve dissipation with explicitly \emph{dissipative} dynamical 
systems.  \cite{muehlebach2019dynamical} and~\cite{diakonikolas2021generalized} considered
a dynamical systems perspective on these systems, showing how relatively simple dissipations can achieve state-of-the-art convergence.  \cite{francca2019conformal} 
took a more geometric perspective, replacing the time--dependent Hamiltonian geometry of
the variational systems with a \emph{conformally symplectic} geometry that generates
dynamical systems of the form
\begin{align}
	\dot X&=\nabla_PH(X,P)\label{eq:CS1}\\
	\dot P&=-\nabla_XH(X,P)-c\,P\label{eq:CS2}\,,
\end{align}
with $c\in\R$ a constant.  Being a geometric dynamical system this approach also 
admits effective geometric integrators similar to \cite{betancourt2018symplectic}.
These conformally symplectic dynamical systems, however, are less general than
the time--dependent variational dynamical systems; in particular NAG cannot be 
exactly recovered in this framework~\cite{francca2019conformal}.

%\textcolor{red}{
Another relevant aspect that has been uncovered by studying optimization algorithms from
a variational or Hamiltonian analysis is the focus on a very important degree
of freedom, the choice of the kinetic energy, that plays a fundamental role in the construction
of fast and stable algorithms that can possibly escape local minima, in direct analogy with what 
happens~in Hamiltonian 
Monte Carlo methods~\cite{betancourt2017geometric,betancourt2017conceptual,livingstone2019kinetic}. 
In particular, first~\cite{maddison2018hamiltonian} and 
then~\cite{francca2019conformal} have
motivated
%}
 that a careful choice of the kinetic energy term can stabilize
the dynamical systems when the objective function is rapidly changing, similar to 
the regularization induced by trust regions. {Indeed,} like the popular Adam, AdaGrad and RMSprop
algorithms, the resulting \emph{Relativistic Gradient Descent} (RGD) algorithm  
regularizes the dynamical velocities to achieve a faster convergence and improved 
stability.

%\textcolor{red}
{Finally~\cite{maddison2018hamiltonian}} introduced another way of incorporating dissipative terms 
into Hamilton's equations~\eqref{eq:H1}--\eqref{eq:H2}. 
Their \emph{Hamiltonian descent}
algorithm is derived from the equations of motion 
\begin{align}
	\dot X&=\nabla_PH(X,P)+X^{*}-X\label{eq:HD1}\\
	\dot P&=-\nabla_XH(X,P)+P^{*}-P\label{eq:HD2}\,,
\end{align}
where $(X^{*},P^{*})=\argmin H(X,P)$.   Because the dynamics are defined using terms only 
linear in $X$ and $P$ they converge to the optimal solution exponentially quickly under 
mild conditions on $H$~\cite{o2019hamiltonian}.  
That said, this exponential convergence 
requires already knowing the optimum $(X^{*},P^{*})$ in order to generate the dynamics.  
Additionally this particular dynamical system lies outside of the variational and conformal 
symplectic families of dynamical systems and so can not take advantage of the geometric 
integrators.

In this work we show that all of the {above--mentioned dynamical systems} 
can be incorporated into a 
single family of \emph{contact Hamiltonian systems}~\cite{Bravetti2017a,bravetti2017contact}
endowed with \emph{a contact geometry}.  The geometric foundation provides a powerful 
set of tools for both studying the convergence of the continuous dynamics as well as
generating structure--preserving discretizations.  
More importantly, the geometric character of these dynamics directly implies that they are 
invariant under \emph{contact transformations}. This indeed will be the key property to prove
%To illustrate this, first we exploit the group of contact transformations to show a particularly nice new property of 
that the Bregman 
dynamics can always be re--written in new coordinates where the Hamiltonian is \emph{separable}, 
thus establishing a definitive positive answer to our guiding question (see Theorem~\ref{th:Kbr}). 
This is the main result of our work.
%meaning that
%it is the sum of a term depending on the momenta and a term depending on the positions only. 
We argue that this property is of fundamental interest for practitioners, as it enables 
to directly construct simple explicit structure--preserving discretizations in the original phase space of the system.
%whenever $H_{\ttt{Br}}$ is \emph{non--separable}, that is, the kinetic term involves both coordinates $X$ and momenta $P$.
%Moreover, we comment on the fact that the so--called ``pre-symplectic'' integrators in~\cite{francca2020dissipative}
%are special instances of the more general contact integrators. 
Finally, equipped with all these results,
we introduce the \emph{Relativistic Bregman Dynamics},
and provide an explicit contact integrator that accurately follows the 
continuous flow in
the original phase space of the system, thus obtaining the associated \emph{Relativistic Bregman algorithm}. 
We also include numerical experiments showing that this simple construction
is comparable to state-of-the-art approaches to discretize the (non--separable) 
Relativistic Bregman dynamics in terms of stability and rates of convergence (see~\cite{Daza-Torres2022_GitHub}).

The structure of this work is as follows: in Section~\ref{sec:CHS} we introduce contact 
Hamiltonian systems and show that all systems corresponding to 
Equations~\eqref{eq:BELEuclidean}, \eqref{eq:CS1}--\eqref{eq:CS2}, and~\eqref{eq:HD1}--\eqref{eq:HD2}
can be easily recovered as particular examples. 
Based on this result, we stress the importance of
(time--dependent) contact transformations 
and then we prove that they can be used to make the Bregman Hamiltonian separable.
To provide an explicit  
example of this construction, 
we then introduce the Relativistic Bregman Dynamics. 
Indeed, after applying Theorem~\ref{th:Kbr}, we obtain an equivalent but separable
Hamiltonian system that can be integrated by splitting,
thus obtaining what we call the related Relativistic Bregman algorithm (RB).
Then in Section~\ref{sec:numerics} 
we illustrate numerically that the RB can 
perform as well as other state-of-the-art algorithms in standard optimization tasks 
(see also~\cite{Daza-Torres2022_GitHub} for further numerical tests and for all the codes).
Finally, in Section~\ref{sec:Conclusions} we summarize the results and discuss
future directions.

%\section{Contact Hamiltonian Systems}
\section{Contact geometry in optimization}\label{sec:CHS}

\subsection{Definitions and examples}\label{subsec:standard}
Contact geometry is a rich subject at the intersection among differential geometry, topology and dynamical systems.
Here in order to ease the exposition, we will present some of the basic facts needed to compare with previous works using symplectic
and pre--symplectic structures in optimization in 
full generality, but we will soon specialize them
to the cases of interest.
For a treatment of the more general theory
see~\cite{Arnold,geiges2008introduction,bravetti2018contact,bravetti2017contact,ciaglia2018contact,deleon2017cos,gaset2019new,de2019contact}.

	\begin{definition}\label{def:contactM}\rm
	A \emph{contact manifold} is a pair $(C,\mathcal D)$, where $C$ is a ($2n+1$)--dimensional manifold and $\mathcal D$ is 
	a maximally non--integrable distribution of hyperplanes on $C$, that is, a smooth assignment at each point $p$ of $C$ of a hyperplane in 
	the tangent space $T_{p}C$.
	\end{definition}
One can prove that $\mathcal D$ can always be given locally as the kernel of a 1--form $\eta$ satisfying the condition $\eta\wedge (d\eta)^{n}\neq 0$,
where $\wedge$ is the wedge product and  $(d\eta)^{n}$ means $n$ times the wedge product
of the 2--form $d\eta$. This characterization will be enough for our purposes, and
therefore we can introduce the following less general but more direct definition.
	\begin{definition}\label{def:excontactM}\rm
	An \emph{exact contact manifold} is a pair $(C,\eta)$, where $C$ is a ($2n+1$)--dimensional manifold and $\eta$ is a 1--form
	satisfying $\eta\wedge (d\eta)^{n}\neq 0$.
	\end{definition}
In what follows we will always restrict to the case of exact contact manifolds.

As it is standard in geometry, transformations that preserve the contact structure, and hence the contact geometry,
play a special role on these spaces.  
By noticing that the geometric object of interest is the kernel of a 1--form, one then defines isomorphisms in the contact 
setting in the following way.

\begin{definition}\label{def:contacttransform}\rm
A \emph{contact transformation} or \emph{contactomorphism} $F:(C_{1},\eta_{1})\rightarrow (C_{2},\eta_{2})$
is a map that preserves the contact structure
    \begin{equation}
    F^{*}\eta_{2}=\alpha_{F}\eta_{1},
    \end{equation}
where $F^{*}$ is the pullback induced by $F$, and $\alpha_{F}:C_{1}\rightarrow \mathbb R$ 
is a nowhere--vanishing function.
\end{definition}

\begin{remark}\label{rem:contacttransformation}\rm

In words, Definition~\ref{def:contacttransform} 
%\cancel{and Examples~\ref{ex:std} and~\ref{ex:H}} 
states 
that a contact map re--scales the contact $1$--form by multiplying it by a 
nowhere--vanishing function. Indeed, such multiplication preserves the kernel of 
the $1$--form, and hence the resulting geometry.

\end{remark}

Let us present a simple but important example of contact manifold:
we take $C=\R^{2n+1}$ %with Cartesian coordinates $(X,P,S)$ 
and specify the same contact structure in 2 different ways, corresponding to different choices of coordinates.
Afterwards we prove that it amounts to the same structure by providing an explicit contactomorphism between the two.
%there are two common ways to specify such a structure.

\begin{example}[The standard contact structure in canonical coordinates]\label{ex:std}\rm
The standard structure is defined as the kernel of the $1$--form
\begin{equation}\label{eq:standard1form}
  \eta_{\mathrm{std1}}:=dS-P dX\,.
\end{equation}
\end{example}
We use ``standard'' because one can show that a contact structure on any manifold looks 
like this one locally~\cite{geiges2008introduction}.
\begin{example}[The standard contact structure in non--canonical coordinates]\label{ex:H}\rm
This structure is defined as the kernel of the $1$--form
\begin{equation}\label{eq:H1form}
  \eta_{\mathrm{std2}}:=d\tilde S-\frac{1}{2}\tilde Pd\tilde X+\frac{1}{2}\tilde Xd \tilde P\,.
\end{equation}
\end{example}
Although this appears different from the structure in Example~\ref{ex:std} they define
equivalent geometries, as we now show.
\begin{remark}\label{rem:contact12}\rm
We can explicitly construct a contact transformation between  $\eta_{\mathrm{std1}}$
and $\eta_{\mathrm{std2}}$ above. The map
\begin{equation}\label{eq:contact12}
	F:(X,P,S)\mapsto\left(\tilde X={X+P},\tilde P=\frac{P-X}{2},\tilde S=S-\frac{XP}{2}\right)
\end{equation}
satisfies $F^{*}\eta_{\mathrm{std2}}=\eta_{\mathrm{std1}}$. Consequently the two 
structures defined in Examples~\ref{ex:std} and~\ref{ex:H} are equivalent.  The
superficial difference arises only because they are written in different coordinates.
\end{remark}

Historically, one of the main motivations to introduce contact geometry is the study of time--dependent Hamiltonian systems on a symplectic manifold.
We briefly sketch these ideas because they will be relevant for our discussion.
	\begin{definition}\label{def:symplM}\rm
	A \emph{symplectic manifold} is a pair $(M,\Omega)$ where $M$ is a $2n$--dimensional manifold and $\Omega$ a 2--form on $M$ that is 
	closed, $d\Omega=0$,
	and non--degenerate, $\Omega^{n}\neq 0$.
	\end{definition}
	\begin{definition}\label{def:SHam}\rm
	A \emph{Hamiltonian system on a symplectic manifold} is a vector field $X_{H}$ which is defined by the condition 
		\begin{equation}\label{eq:HamS}
		\Omega(X_{H},\cdot)=-dH
		\end{equation}
	where $H:M\rightarrow \mathbb R$ is a function called the \emph{Hamiltonian}. The equations for the integral curves of $X_{H}$
	are usually called \emph{Hamilton's equations}, and they are the foundations of the geometric treatment of mechanical systems.
	\end{definition}

Analogously to what happens in the contact case, 
it can be proved that all symplectic manifolds locally look the same; this result is known as Darboux's Theorem~\cite{abraham1978foundations}. 
In particular, they all look like the Euclidean space  $\R^{2n}$ with 
coordinates $(X,P)$, where $X\in\R^{n}$ play the role in physics 
of the \emph{generalized coordinates}, and $P\in\R^{n}$ of the corresponding \emph{momenta}.
In such coordinates  $\Omega=dP\wedge dQ$, and thus Hamilton's equations~\eqref{eq:HamS} read like~\eqref{eq:H1} and~\eqref{eq:H2}, but with the important
difference that $H:M\rightarrow \mathbb R$ does not depend on time, thus giving rise to the so--called \emph{conservative mechanical systems}
(the name is due to the fact that $H(X,P)$ is usually the total mechanical energy of the system and it is easy to show that it is conserved along the
dynamics).

To allow for $H$ to depend also on time and thus to describe \emph{dissipative systems} in the geometric setting, one usually performs the following extension:
first extend the manifold $M$ to $M\times\mathbb R$, where time $t$ is defined to be the coordinate on $\mathbb R$.
At this point $H(X,P,t)$ is a well defined function on $M\times\mathbb R$. Now, define (locally at least) the 1--form on $\theta=PdX-H(X,P,t)dt$. This is called
the \emph{Poincar\'e--Cartan} 1--form.
Finally, define the dynamics $\tilde X_{H}$ by the two conditions 
	\begin{equation}\label{eq:PCH}
	d\theta(\tilde X_{H},\cdot)=0, \qquad \theta(\tilde X_{H})=1\,.
	\end{equation}
It follows that the resulting equations for the integral curves of $\tilde X_{H}$ in the coordinates $(X,P,t)$ are just~\eqref{eq:H1} and~\eqref{eq:H2}
for a generic $H$ (now with the explicit
time dependence on $H$), plus a trivial equation for $t$, namely $\dot t=1$.

	\begin{remark}\label{rem:PC}\rm
	For us there are four important points that are worth being remarked about this construction:
	\begin{enumerate}
	\item $\theta$ is a contact form on $M\times\mathbb R$ (unless $H$ is a homogeneous function of degree 1 in $P$, a case which is not interesting here).
	\item From the point of view of contact geometry, 
	the dynamics in \eqref{eq:PCH} is a very special type of Hamiltonian system on a contact manifold (to be defined below), corresponding to 
	a constant Hamiltonian function with value -1 (see Remark~\ref{rem:inclusion} below).
	\item Notice that, although Hamilton's equations~\eqref{eq:H1} and~\eqref{eq:H2} 
	dissipate the energy function $H(X,P,t)$, from the geometric point of view the system
	of Hamilton's equations~\eqref{eq:H1} and~\eqref{eq:H2} and $\dot t=1$ is conservative, 
	because $\pounds_{\tilde X_{H}}\theta=0$, 
	where $\pounds_{\tilde X_{H}}\theta$ denotes the Lie derivative of $\theta$ with respect to 
    $\tilde X_{H}$.
	This means that the dynamics preserves $\theta$,
	and hence the volume of the space given by $\theta\wedge (d\theta)^{n}$.
	\item The \emph{pre--symplectic dynamics} used in~\cite{francca2020dissipative} is exactly the dynamics of $\tilde X_{H}$. 
	Indeed one can check
	that their final manifold of states and dynamical equations, after fixing the appropriate gauge, 
	coincide with $M\times \mathbb R$ and~\eqref{eq:PCH} respectively
	(indeed, it suffices to specialize our discussion to the case $M=T^{*}\mathcal M$, and note that $d\theta=\Omega$ in their notation).
	Similarly, also the \emph{cosymplectic dynamics} used in~\cite{campos2021discrete} is precisely the dynamics of $\tilde X_{H}$.
	\end{enumerate}
	\end{remark}

We can now define dynamical systems that generalize the Hamiltonian systems arising
in symplectic geometries.

\begin{definition}[Contact Hamiltonian systems]\label{def:CHS}\rm
Given a possibly time--dependent differentiable function $\cH$ on a contact manifold 
$(C,\eta)$, we define the \emph{contact Hamiltonian vector 
field associated to $\cH$}  as the vector field $X_{\cH}$ satisfying
\begin{equation}\label{eq:CHVF}
  \pounds_{X_{\cH}}\eta=-R(\cH)%\tr{-\frac{\partial \cH}{\partial S}}
  \,\eta \qquad \eta(X_{\cH})=-\cH\,,
\end{equation}
where $\pounds_{X_{\cH}}\eta$ denotes the Lie derivative of $\eta$ with respect to 
$X_{\cH}$ and $R$ is the \emph{Reeb vector field}, that is, the vector field satisfying
$d\eta(R,\cdot)=0$ and $\eta(R)=1$.
%$\eta$ can be either $\eta_{\mathrm{std1}}$ or $\eta_{\mathrm{std2}}$ respectively.  
We denote the flow of $X_{\cH}$ the \emph{contact Hamiltonian system 
associated to~$\cH$}.
\end{definition}
	
\begin{remark}\rm
The first condition in~\eqref{eq:CHVF} simply ensures that the flow of $X_{\cH}$
generates contact transformations, while the second condition requires the vector 
field to be generated by a Hamiltonian function.
\end{remark}
\begin{remark}\label{rem:inclusion}\rm
	It follows directly from Definition~\ref{def:CHS} and from identifying $C=M\times\mathbb R$ and $\eta=\theta$
	that the dynamics $\tilde X_{H}$ describing time--dependent symplectic Hamiltonian 
	systems is the contact Hamiltonian system corresponding to the contact Hamiltonian
	$\cH=-1$, and as such it is a very particular instance of a contact Hamiltonian system.
	In our discussion we will consider more general instances, in the sense that the contact manifold $C$ is not restricted to 
	be of the form $M\times \mathbb R$ and the Hamiltonian can be any function on $C$.
	Notice that for a general contact Hamiltonian system it follows directly from~\eqref{eq:CHVF}
	that the derivative along the flow of the Hamiltonian is $\pounds_{X_{\cH}}\cH=-R(\cH)\cH$, meaning that $\cH$ is not conserved. 
	For instance, when $R(\cH)$ is a strictly positive function,
	as we shall always consider below, 
	the function $\cH$ is constantly dissipated; however, more general dissipative terms may be allowed.
	A similarly direct calculation shows that $\pounds_{X_{\cH}}\eta\wedge(d\eta)^{n}=-(n+1)R(\cH)\eta\wedge(d\eta)^{n}$, 
	meaning that also the volume is contracted,
	to be compared with $\pounds_{\tilde X_{H}}\theta\wedge(d\theta)^{n}=0$, 
	as it happens for the geometric description of time--dependent symplectic Hamiltonian systems
	in the Poincar\'e--Cartan (or pre--symplectic) setting.
	Therefore we see that in the general contact case we have also a geometric description of dissipation, 
	defined as a contraction of the relevant volume form.
\end{remark}

As we did above where we introduced the standard model of contact manifolds in two different useful coordinate systems,
we write here the corresponding models for contact Hamiltonian systems.

\begin{lemma}[Contact Hamiltonian systems: std1]\label{def:CHSstandard}\rm
Given a (possibly time--dependent) differentiable function $\cH(X,P,S,t)$ on the 
contact state space $(\mathbb R^{2n+1},\eta_{\mathrm{std1}})$, the associated contact
Hamiltonian system is the following dynamical system
\begin{align}
\dot X &=
\nabla_{P}\cH\label{eq:CH1}
\\
\dot P &=-\nabla_{X}\cH-P\frac{\partial \cH}{\partial S}\label{eq:CH2}
\\
\dot S &=\langle\nabla_{P}\cH,P\rangle-\cH\label{eq:CH3}\,.
\end{align} 
\end{lemma}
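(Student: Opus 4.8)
The plan is to work in the canonical coordinates $(X,P,S)$ and convert the two coordinate--free conditions of Definition~\ref{def:CHS} into explicit equations for the components of $X_{\cH}$, which along integral curves are exactly $(\dot X,\dot P,\dot S)$. First I would record that for $\eta_{\mathrm{std1}}=dS-P\,dX$ one has $d\eta_{\mathrm{std1}}=-dP\wedge dX$, so the Reeb field characterized by $d\eta(R,\cdot)=0$ and $\eta(R)=1$ is indeed $R=\partial/\partial S$, and hence $R(\cH)=\partial\cH/\partial S$. Writing $X_{\cH}=\dot X\cdot\partial_X+\dot P\cdot\partial_P+\dot S\,\partial_S$ (with $t$ entering only as a parameter, since $X_{\cH}$ is a vector field on $C$), the second condition $\eta(X_{\cH})=-\cH$ reads $\dot S-\langle P,\dot X\rangle=-\cH$, which will become~\eqref{eq:CH3} once $\dot X$ is identified.

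For the first condition I would use Cartan's magic formula $\pounds_{X_{\cH}}\eta=d(\iota_{X_{\cH}}\eta)+\iota_{X_{\cH}}\,d\eta$. The first term equals $-d\cH$ with the differential taken on $C$ at fixed $t$, i.e.~$-\langle\nabla_X\cH,dX\rangle-\langle\nabla_P\cH,dP\rangle-(\partial_S\cH)\,dS$; the second term, obtained by contracting $-dP\wedge dX$ with $X_{\cH}$, equals $-\langle\dot P,dX\rangle+\langle\dot X,dP\rangle$. On the right--hand side, $-R(\cH)\,\eta=-(\partial_S\cH)\,dS+(\partial_S\cH)\,\langle P,dX\rangle$.

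Matching the coefficients of the independent $1$--forms then finishes the argument: the coefficients of $dS$ agree identically (a built--in consistency check), the coefficients of $dP$ force $\dot X=\nabla_P\cH$, which is~\eqref{eq:CH1}, and the coefficients of $dX$ give $-\nabla_X\cH-\dot P=(\partial_S\cH)\,P$, i.e.~$\dot P=-\nabla_X\cH-P\,\partial_S\cH$, which is~\eqref{eq:CH2}. Substituting $\dot X=\nabla_P\cH$ into $\dot S=\langle P,\dot X\rangle-\cH$ yields~\eqref{eq:CH3}, completing the derivation.

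I do not anticipate a genuine obstacle here; the only things to watch are the sign and orientation conventions in $d\eta$ and in the interior product, and the fact that the explicit $t$--dependence of $\cH$ must not enter $d\cH$, because the Lie derivative is taken along a vector field on $C$ rather than on $C\times\mathbb R$. A slightly more abstract alternative would be to invoke non--degeneracy of the contact structure, which makes $X_{\cH}$ the unique solution of~\eqref{eq:CHVF}, and then merely verify that~\eqref{eq:CH1}--\eqref{eq:CH3} satisfy both conditions; this is no shorter than the direct computation above and somewhat less transparent.
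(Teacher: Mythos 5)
Your computation is correct and is precisely the argument the paper indicates: write the two conditions of Definition~\ref{def:CHS} explicitly in the canonical coordinates of $\eta_{\mathrm{std1}}$, apply Cartan's identity $\pounds_{X_{\cH}}\eta=d(\iota_{X_{\cH}}\eta)+\iota_{X_{\cH}}d\eta$, and match coefficients of $dX$, $dP$, $dS$. Your side remarks (that $R=\partial/\partial S$, that the $dS$ coefficients match automatically, and that the explicit $t$--dependence does not enter $d\cH$) are all accurate and consistent with the paper's conventions.
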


\begin{lemma}[Contact Hamiltonian system: std2]\label{def:CHSHeisenberg}\rm
Given a (possibly time--dependent) differentiable function $\cH(X,P,S,t)$ on the 
contact state space $(\mathbb R^{2n+1}, \eta_{\mathrm{std2}})$, the associated
contact Hamiltonian system is the following dynamical system
\begin{align}
\dot X &=
\nabla_{P}\cH-\frac{1}{2}X\frac{\partial \cH}{\partial S}\label{eq:CHH1}
\\
\dot P &=-\nabla_{X}\cH-\frac{1}{2}P\frac{\partial \cH}{\partial S}\label{eq:CHH2}
\\
\dot S &=\frac{1}{2}\left(\langle X,\nabla_{X}\cH\rangle+\langle P,\nabla_{P}\cH\rangle\right)-\cH\label{eq:CHH3}\,.
\end{align} 
\end{lemma}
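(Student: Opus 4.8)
Here is how I would establish Lemma~\ref{def:CHSHeisenberg}.

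\medskip

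\noindent\textit{Proof plan.} The plan is a direct computation from Definition~\ref{def:CHS} in the coordinates $(X,P,S)$ adapted to $\eta_{\mathrm{std2}}$ in~\eqref{eq:H1form} (dropping tildes). First I would record the two geometric ingredients that enter~\eqref{eq:CHVF}. Exterior differentiation of $\eta_{\mathrm{std2}}=dS-\frac12 P\,dX+\frac12 X\,dP$ gives $d\eta_{\mathrm{std2}}=-\frac12 dP\wedge dX-\frac12 dP\wedge dX=-dP\wedge dX=dX\wedge dP$ (the two half--contributions add up), which in particular shows $\eta_{\mathrm{std2}}\wedge(d\eta_{\mathrm{std2}})^{n}\neq 0$, so this is indeed a contact form. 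Since $d\eta_{\mathrm{std2}}$ contains no $dS$ and $\eta_{\mathrm{std2}}(\partial/\partial S)=1$, the Reeb field is $R=\partial/\partial S$, and hence $R(\cH)=\partial\cH/\partial S$.

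\medskip

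\noindent Next I would write the unknown vector field as $X_{\cH}=A\cdot\nabla_X+B\cdot\nabla_P+C\,\partial/\partial S$ and impose the two conditions in~\eqref{eq:CHVF}. The normalization $\eta_{\mathrm{std2}}(X_{\cH})=-\cH$ reads $C-\frac12\langle P,A\rangle+\frac12\langle X,B\rangle=-\cH$. For the first condition I would use Cartan's formula $\pounds_{X_{\cH}}\eta_{\mathrm{std2}}=d(\iota_{X_{\cH}}\eta_{\mathrm{std2}})+\iota_{X_{\cH}}d\eta_{\mathrm{std2}}$. Here $\iota_{X_{\cH}}\eta_{\mathrm{std2}}=-\cH$ by the normalization, so its differential is $-d\cH=-\nabla_X\cH\cdot dX-\nabla_P\cH\cdot dP-(\partial\cH/\partial S)\,dS$ — note that $t$ is only an external parameter on $C$, so no $dt$ term appears — while $\iota_{X_{\cH}}(dX\wedge dP)=A\cdot dP-B\cdot dX$. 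Matching the resulting $1$--form against $-R(\cH)\,\eta_{\mathrm{std2}}=-(\partial\cH/\partial S)\bigl(dS-\frac12 P\,dX+\frac12 X\,dP\bigr)$ coefficient by coefficient: the $dS$ components agree identically (this is the consistency check that the over-determined system for $X_{\cH}$ is in fact well posed), the $dX$ components give $B=-\nabla_X\cH-\frac12 P\,(\partial\cH/\partial S)$, and the $dP$ components give $A=\nabla_P\cH-\frac12 X\,(\partial\cH/\partial S)$, which are exactly~\eqref{eq:CHH1} and~\eqref{eq:CHH2}. Substituting these $A$ and $B$ into the normalization equation, the two $\frac14\langle X,P\rangle(\partial\cH/\partial S)$ terms cancel and one is left with $C=-\cH+\frac12\bigl(\langle X,\nabla_X\cH\rangle+\langle P,\nabla_P\cH\rangle\bigr)$, i.e.~\eqref{eq:CHH3}.

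\medskip

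\noindent There is essentially no obstacle here beyond careful bookkeeping; the only points requiring attention are (i) keeping the two half--forms in $d\eta_{\mathrm{std2}}$ straight, so as not to drop the coefficient in front of $dP\wedge dX$, and (ii) remembering that the Lie derivative in~\eqref{eq:CHVF} involves only the contact differential, so the explicit time dependence of $\cH$ does not enter the determination of $X_{\cH}$ and the associated dissipation is accounted for separately (cf.\ Remark~\ref{rem:inclusion}). As an alternative one could instead push the std1 system of Lemma~\ref{def:CHSstandard} forward along the contactomorphism~\eqref{eq:contact12} of Remark~\ref{rem:contact12}, which has conformal factor identically $1$; this is quick because contact Hamiltonian vector fields are natural under contactomorphisms and the Hamiltonian is then transported without rescaling, but since that naturality statement is not among the facts recorded above, I would favour the self-contained direct computation.
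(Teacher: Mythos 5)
Your proposal is correct and follows exactly the route the paper indicates for this lemma: writing out the two conditions in~\eqref{eq:CHVF} for $\eta_{\mathrm{std2}}$ and using Cartan's identity, then matching coefficients of $dX$, $dP$, $dS$ to read off the vector field. The coefficient bookkeeping (in particular $d\eta_{\mathrm{std2}}=dX\wedge dP$, the Reeb field $R=\partial/\partial S$, and the cancellation of the $\tfrac14\langle X,P\rangle\,\partial\cH/\partial S$ terms in the $\dot S$ equation) all checks out.
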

The proofs of the above lemmas follow from writing explicitly the conditions in~\eqref{eq:CHVF}
for $\eta_{\mathrm{std1}}$ and $\eta_{\mathrm{std2}}$ respectively, using Cartan's identity 
for the Lie derivative of a 1--form, and {from the fact that %contracting the first identity in~\eqref{eq:CHVF} 
%with the vector field 
$R=\partial/\partial S$ in both cases, as can be seen by writing its definition in the corresponding coordinates.}
	
\begin{remark}[Variational formulation]\label{rem:variational}\rm
Contact systems can alternatively be introduced starting from Herglotz' variational principle~\cite{georgieva2003generalized,vermeeren2019contact,de2019singular,anahory2021geometry}, with 
the Lagrangian function 
$\cL(X,V,S,t)$ and its corresponding generalized Euler--Lagrange equations
\begin{equation}\label{eq:GEL}
\frac{d}{dt}\left(\frac{\partial \cL}{\partial V}\right)-\frac{\partial \cL}{\partial X}-\frac{\partial \cL}{\partial V}\frac{\partial \cL}{\partial S}=0\,,
\end{equation}
together with the action equation
\begin{equation}\label{eq:action}
\dot S=\cL(X,V,S,t)\,.
\end{equation}
Indeed, for regular Lagrangians it can be shown that~\eqref{eq:CH1}--\eqref{eq:CH3} are 
equivalent to the system~\eqref{eq:GEL}--\eqref{eq:action}. %, \textcolor{red}{where $V$ is the Legendre conjugate of $P$}
\end{remark}

\subsection{Main results}\label{subsec:main}
%\textcolor{red}{Given a general dynamical systems, }
We arrive at our first main result with a direct calculation using equations~\eqref{eq:CH1}--\eqref{eq:CH3} and~\eqref{eq:CHH1}--\eqref{eq:CHH3}, 
\begin{proposition}[Recovering previous frameworks]\label{prop:recover}\rm
All the previously--mentioned frameworks for describing continuous--time optimization methods 
can be recovered as follows:
\begin{itemize}

\item[i)] 
If $\cH=H(X,P,t)$, that is, if $\cH$ does not depend explicitly on $S$, then from 
equations~\eqref{eq:CH1}--\eqref{eq:CH2} we obtain the standard Hamiltonian 
equations~\eqref{eq:H1}--\eqref{eq:H2}, with~\eqref{eq:CH3} completely decoupled from the system.  
In particular, this includes the Bregman dynamics~\eqref{eq:H1} and \eqref{eq:H2} as a special case of contact Hamiltonian systems.
%\tr{Consequently all the results about convergence rates  of the continuous--time 
%Bregman dynamics and of its geometric discretization are particular cases}.
 	
\item[ii)] 
If $\cH=H(X,P)+c\,S$, then from equations~\eqref{eq:CH1}--\eqref{eq:CH2} we obtain the 
standard equations for conformally symplectic systems~\eqref{eq:CS1}--\eqref{eq:CS2}, 
with~\eqref{eq:CH3} once again completely decoupled from the system.  
%Consequently we recover all the results for the conformally symplectic analysis of continuous--time optimization dynamics
%considered in~\cite{francca2019conformal} as particular cases.

%\item[iii)] 
%If $\cH=\frac{1}{2}\norm{P}^{2}+\e^{2\alpha+\beta} f(X)+\left(\e^{\alpha}-\dot\alpha\right)S$, 
%then from equations~\eqref{eq:CH1}--\eqref{eq:CH2} we obtain the Euler--Lagrange 
%equations~\eqref{eq:BELEuclidean} for the Euclidean Bregman Lagrangian.  As in the first two 
%cases~\eqref{eq:CH3} completely decouples from the system.  
%Here we recover all the results 
%obtained in~\cite{wibisono2016variational} by exploiting this variational formulation in continuous 
%time. 

\item[iii)] 
If $\cH=H(X,P)+\langle X^{*},P\rangle-\langle P^{*},X\rangle+2S$, then from equations~\eqref{eq:CHH1}--\eqref{eq:CHH2} we 
obtain the Hamiltonian descent equations~\eqref{eq:HD1}--\eqref{eq:HD2}, with~\eqref{eq:CHH3} 
decoupled from the system.  

\item[iv)]
If $\cH=\frac{1}{2}\norm{P}^{2}+f(X)+\frac{3}{t}S$, then from equations~\eqref{eq:CH1}--\eqref{eq:CH2} we 
obtain the continuous limit of NAG~\eqref{eq:DiffEqNAG}, with~\eqref{eq:CH3} 
decoupled from the system. 

%Consequently we recover all the results for the Hamiltonian descent
%analysis of continuous--time optimization dynamics considered in~\cite{maddison2018hamiltonian} 
%and~\cite{o2019hamiltonian} as 
%particular cases.
\end{itemize}

\end{proposition}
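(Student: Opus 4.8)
The plan is to verify each of the four items by straightforward substitution into the coordinate formulas of Lemmas~\ref{def:CHSstandard} and~\ref{def:CHSHeisenberg}, checking in each case that the $(X,P)$-equations reproduce the claimed target system and that the $S$-equation decouples (i.e.\ its right-hand side does not feed back into the $X$- or $P$-equations). The key observation making the decoupling claims immediate is that in every case $\partial\cH/\partial S$ is either zero or a constant independent of $S$, and $\cH$ itself enters the $X,P$ equations only through $\nabla_X\cH$ and $\nabla_P\cH$, which for all four choices are independent of $S$; hence equations~\eqref{eq:CH3} (resp.~\eqref{eq:CHH3}) are genuinely slaved and can be ignored.

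For item~i), setting $\partial\cH/\partial S=0$ in~\eqref{eq:CH1}--\eqref{eq:CH2} gives exactly~\eqref{eq:H1}--\eqref{eq:H2}; the Bregman case is then the instance $\cH=H_{\ttt{Br}}(X,P,t)$. For item~ii), with $\cH=H(X,P)+cS$ we have $\nabla_P\cH=\nabla_PH$, $\nabla_X\cH=\nabla_XH$, and $\partial\cH/\partial S=c$, so~\eqref{eq:CH2} becomes $\dot P=-\nabla_XH-cP$, matching~\eqref{eq:CS1}--\eqref{eq:CS2}. For item~iii), I would use the std2 formulas~\eqref{eq:CHH1}--\eqref{eq:CHH2}: with $\cH=H(X,P)+\langle X^*,P\rangle-\langle P^*,X\rangle+2S$ one computes $\nabla_P\cH=\nabla_PH+X^*$, $\nabla_X\cH=\nabla_XH-P^*$, and $\partial\cH/\partial S=2$, so $\dot X=\nabla_PH+X^*-\tfrac12X\cdot 2=\nabla_PH+X^*-X$ and similarly $\dot P=-\nabla_XH+P^*-P$, which are~\eqref{eq:HD1}--\eqref{eq:HD2}. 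For item~iv), I would first apply item~ii) with $H(X,P)=\tfrac12\norm{P}^2+f(X)$ and the time-dependent ``constant'' $c=3/t$ to get $\dot X=P$, $\dot P=-\nabla f(X)-\tfrac3t P$; differentiating the first and substituting the second yields $\ddot X+\tfrac3t\dot X+\nabla f(X)=0$, i.e.~\eqref{eq:DiffEqNAG}. (One should note that with an explicitly time-dependent $c=3/t$ the relevant ambient space is $C\times\R$ with $t$ adjoined and $\dot t=1$, as in the Poincar\'e--Cartan construction of Remark~\ref{rem:PC}; alternatively one absorbs the time dependence into $\cH$ directly, and the $S$-equation~\eqref{eq:CH3} remains decoupled either way.)

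There is no serious obstacle here — the proposition is essentially a bookkeeping statement — but the one point requiring care is the cross-referencing of coordinate conventions: items~i), ii), iv) are naturally phrased in the std1 coordinates of Lemma~\ref{def:CHSstandard}, whereas item~iii) must be done in the std2 coordinates of Lemma~\ref{def:CHSHeisenberg}, since the antisymmetric combination $\langle X^*,P\rangle-\langle P^*,X\rangle$ together with the factor~$2$ in front of $S$ is exactly what is needed to make the $-\tfrac12X\,\partial\cH/\partial S$ and $-\tfrac12P\,\partial\cH/\partial S$ terms collapse to the clean $X^*-X$ and $P^*-P$ of Hamiltonian descent. I would therefore state explicitly at the start of the proof which lemma is being invoked for each item, and then the four verifications are each a two-line computation. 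The only other subtlety worth flagging is the (harmless) time dependence in item~iv), which I would dispatch with a one-sentence remark pointing back to Remark~\ref{rem:PC}.
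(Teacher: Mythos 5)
Your proposal is correct and follows exactly the route the paper intends: the paper's own ``proof'' is just the remark that the proposition follows ``with a direct calculation using equations~\eqref{eq:CH1}--\eqref{eq:CH3} and~\eqref{eq:CHH1}--\eqref{eq:CHH3}'', and your substitutions (std1 coordinates for items i), ii), iv); std2 coordinates for item iii), where the antisymmetric linear terms and the factor $2$ in front of $S$ produce the $X^{*}-X$ and $P^{*}-P$ terms) are precisely that calculation carried out. Your added care about the explicit time dependence in item iv) and the coordinate bookkeeping is a useful clarification but does not change the argument.
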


Three immediate but powerful consequences of Proposition~\ref{prop:recover}
are given in the following corollaries.
	\begin{corollary}\label{cor:convergencerates}\rm
All the convergence analyses of the above dynamics provided in the corresponding literature hold.
	\end{corollary}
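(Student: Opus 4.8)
The plan is to read this off Proposition~\ref{prop:recover} with essentially no extra work. For each of the four frameworks that proposition exhibits an explicit contact Hamiltonian $\cH$ whose associated contact Hamiltonian system, written in the coordinates $(X,P,S)$ via Lemma~\ref{def:CHSstandard} or Lemma~\ref{def:CHSHeisenberg}, reproduces the original equations of motion in the $(X,P)$ variables while the $\dot S$ equation decouples. The first step is to make precise what this decoupling buys us: in each of the cases i)--iv) the quantity $\partial\cH/\partial S$ is either identically zero (case i) or a function of $t$ alone (a constant $c$ in case ii, the constant $2$ in case iii, and $3/t$ in case iv), so the right--hand sides of the equations for $\dot X$ and $\dot P$ never involve $S$. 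Hence the subsystem in $(X,P)$ --- augmented by $\dot t=1$ when $\cH$ is time--dependent --- is closed, and its solution curve $t\mapsto(X(t),P(t))$ is, for every choice of $S_0$, literally the same curve studied in \cite{su2014differential,wibisono2016variational,betancourt2018symplectic,francca2020conformal,francca2020dissipative,maddison2018hamiltonian,o2019hamiltonian}.

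Given this identification the second step is immediate. Every statement in those references that bounds $f(X(t))-f(X^*)$ --- for instance the rate \eqref{eq:ExpConv} for the Bregman dynamics, the polynomial and late--time exponential rates for the conformally symplectic dynamics, the exponential convergence of Hamiltonian descent under the mild conditions on $H$ recalled above, and the $\cO(1/t^2)$ rate for the continuous limit \eqref{eq:DiffEqNAG} of NAG --- is a statement about a function of the trajectory $(X(t),P(t))$ only. Since that trajectory coincides with the one generated by the corresponding contact Hamiltonian system, the bound transfers verbatim; one simply substitutes the contact Hamiltonian of Proposition~\ref{prop:recover} for the object originally used, and none of the proofs in the literature needs to be re--examined.

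I do not expect any real obstacle here beyond the bookkeeping just described. The one point that genuinely has to be checked is that the extra coordinate $S$ does not feed back into the $(X,P)$ dynamics, and this is exactly the decoupling already asserted in Proposition~\ref{prop:recover} and verified by inspecting $\partial\cH/\partial S$ in each case. If one wanted to phrase the corollary at the level of the full contact flow rather than its $(X,P)$ projection, the only remaining remark is that the added $\dot S$ equation is an ordinary differential equation driven by the already--solved $(X,P,t)$ trajectory, so it imposes no further dynamical constraint and in particular cannot spoil any of the convergence estimates.
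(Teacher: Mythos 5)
Your proposal is correct and is essentially the argument the paper itself relies on: the corollary is presented as an immediate consequence of Proposition~\ref{prop:recover}, precisely because in each case $\partial\cH/\partial S$ is independent of $S$, so the $(X,P)$ subsystem is closed and its trajectories coincide with those analyzed in the cited references, letting the convergence bounds transfer verbatim. Your explicit verification of the decoupling in cases i)--iv) just makes visible what the paper leaves implicit.
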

In particular, by choosing the contact Hamiltonian $\cH=H_{\ttt{Br}}(X,P,t)$, and for the appropriate choice
of the free functions $\alpha, \beta$ and $\gamma$  we can obtain contact systems with polynomial convergence rates of any order 
and even exponential convergence rates (see Equation~\ref{eq:ExpConv}). 
So we see that in the class of contact Hamiltonian systems
we can at least reproduce all the conventional approaches to convergence rates from the Hamiltonian perspective.
Moreover, contact systems provide the opportunity to generalize all these dynamics, for instance by fixing nonlinear dependences
on the additional variable $S$, a strategy that will be considered in future works.
As a second consequence of Proposition~\ref{prop:recover}, we have
    \begin{corollary}\rm
    All the dynamics in Proposition~\ref{prop:recover} have a variational formulation.
    \end{corollary}
This important observation outlines the fact that all such dynamics may be studied by variational methods,
either in the continuous case (see e.g.~the interesting analysis in~\cite{zhang2021rethinking} and compare with~\cite{ryan2022action}) 
or in the discrete one (cf.~\cite{campos2021discrete} and~\cite{vermeeren2019contact,anahory2021geometry}).
All these aspects can now be analyzed through the lens of the corresponding contact tools, cf.~Remark~\ref{rem:variational}.
In this work we focus on the following (third) key aspect of the geometric approach:
	\begin{corollary}\label{cor:invariance}\rm
All the systems in Proposition~\ref{prop:recover} 
share invariance under a very large group of transformations, the group 
of (time--dependent) contact transformations.
	\end{corollary}
Corollary~\ref{cor:invariance} has interesting practical ramifications:
on the one hand, invariance under all contact transformations guarantees that all such dynamics, and the corresponding algorithms
obtained through geometric discretizations, are much less sensitive to the conditioning of the problem
rather than other algorithms that do not possess a geometric structure 
(we refer to~\cite{maddison2018hamiltonian} for an illuminating discussion on this, 
and we remark that contact transformations are more general than the symplectic transformations considered therein in Section~2.1).
%	\item 
	On the other hand,	one can exploit contact
	transformations to re--write the dynamics in particular sets of 
	coordinates where the system takes a simpler form, with great benefit for the ensuing discretization.
	
As a first ``warm-up'' example of the utility of contact transformations in optimization, we now prove that
NAG is a composition of a contact transformation followed by a simple gradient descent step.
 This result is inspired by the conjecture put forward in~\cite{betancourt2018symplectic},
who argued that \emph{symplectic} maps followed by gradient descent steps can generate the exponential convergence 
near convex optima empirically observed in discrete--time NAG.
Here instead we provide an actual proof that NAG is based on the composition of a \emph{contact} map and
a gradient step.
	 
\begin{proposition}[NAG is contact $+$ gradient]\rm	 
Discrete--time NAG,~\eqref{eq:NAG1}--\eqref{eq:NAG2}, is 
%\textcolor{red}
{given by the composition of a contact
map and a gradient descent step.}
\end{proposition}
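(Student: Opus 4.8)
The plan is to exhibit one step of the recursion~\eqref{eq:NAG1}--\eqref{eq:NAG2} explicitly as a composition $\Psi_k=\mathcal{C}_k\circ\mathcal{G}_\tau$, where $\mathcal{G}_\tau$ is a gradient descent step and $\mathcal{C}_k$ is a diffeomorphism of $\R^{2n+1}$ that one checks, by a short direct computation, to be a contactomorphism of $(\R^{2n+1},\eta_{\mathrm{std1}})$ in the sense of Definition~\ref{def:contacttransform}. The first task is to fix the right state space: a NAG iterate depends on the two previous positions, so I would introduce a momentum variable $P$ as a rescaled finite difference of consecutive iterates and adjoin the extra contact variable $S$, so that the state of NAG after one step lives in $\R^{2n+1}$ with coordinates $(X,P,S)$. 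The variable $S$ is absent from~\eqref{eq:NAG1}--\eqref{eq:NAG2}, and its update rule will be reconstructed at the end from the requirement that $\mathcal{C}_k$ be a contact map.

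With these coordinates, unrolling one step (the gradient step of~\eqref{eq:NAG1}, then the extrapolation of~\eqref{eq:NAG2}) goes as follows. The assignment $X_k=P_{k-1}-\tau\nabla f(P_{k-1})$ is literally the gradient descent step $p\mapsto p-\tau\nabla f(p)$ applied to the current extrapolation point, and in the $(X,P)$ variables it reads $X\mapsto X-\tau\nabla f(X)$, $P\mapsto P-\nabla f(X)$; this map $\mathcal{G}_\tau$ is visibly \emph{not} a contact transformation once $\nabla^2 f\neq 0$, which is exactly why discrete NAG is not a contact flow alone. The subsequent extrapolation $P_k=X_k+\tfrac{k-1}{k+2}(X_k-X_{k-1})$ becomes, in the $(X,P)$ variables, the affine map $X\mapsto X+\mu_k\tau P$, $P\mapsto\mu_k P$ with $\mu_k=\tfrac{k-1}{k+2}$. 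I would then pin down its $S$--component by imposing $\mathcal{C}_k^{*}\eta_{\mathrm{std1}}=\alpha_k\,\eta_{\mathrm{std1}}$: pulling $\eta_{\mathrm{std1}}=dS-P\,dX$ back through the affine map gives $dS'-\mu_k\,P\,dX-\mu_k^2\tau\,d(\tfrac12\norm{P}^2)$, and since $P\,dX$ is not exact, matching the two sides forces $\alpha_k=\mu_k$ and $S'=\mu_k S+\tfrac12\mu_k^2\tau\norm{P}^2$ up to an irrelevant constant. As $\alpha_k=\tfrac{k-1}{k+2}$ is a nonzero constant for $k\ge 2$, the map $\mathcal{C}_k$ is a genuine contactomorphism, so $\Psi_k=\mathcal{C}_k\circ\mathcal{G}_\tau$ is the claimed decomposition. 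I would conclude by remarking that $\mathcal{C}_k$ depends on the step $k$, consistent with the time--dependent coefficient $3/t$ in the contact Hamiltonian of Proposition~\ref{prop:recover}(iv) --- indeed $\mathcal{C}_k$ is precisely a semi--implicit Euler discretization of the contact flow of the $f$--independent part $\tfrac12\norm{P}^2+\tfrac3tS$ of that Hamiltonian --- and that the step $k=1$, where $\mu_1=0$ and NAG reduces to plain gradient descent, is a harmless boundary case.

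The main obstacle is the set--up rather than any later computation: since NAG is phrased purely in positions, one must guess both the momentum variable and, more delicately, the hidden action variable $S$, which is entirely invisible in~\eqref{eq:NAG1}--\eqref{eq:NAG2} and has to be reverse--engineered so that the affine extrapolation map satisfies the rescaling identity of Definition~\ref{def:contacttransform}. Once the coordinates are chosen correctly, what remains is the one--line pullback computation of $\mathcal{C}_k^{*}\eta_{\mathrm{std1}}$, entirely parallel to the calculations behind Lemma~\ref{def:CHSstandard}.
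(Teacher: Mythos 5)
Your decomposition is genuinely different from the paper's, and most of it is sound, but it falls short of the literal statement in one place. The paper keeps $P$ as NAG's look--ahead point, works with $\eta_{\mathrm{std2}}$, and takes the contact factor to be the swap--plus--extrapolation map $X_{k+1}=P_k$, $P_{k+1}=X_{k+1}+\frac{k-1}{k+2}(X_{k+1}-X_k)$, $S_{k+1}=\frac{k-1}{k+2}S_k$; the payoff of that choice of variables is that the remaining factor is then literally $X\mapsto X-\tau\nabla f(X)$ with $P$ and $S$ untouched, i.e.\ a standard gradient descent step. You instead pass to momentum coordinates and use $\eta_{\mathrm{std1}}$. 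Your pullback computation is correct: for $F:(X,P,S)\mapsto(X+\mu_k\tau P,\,\mu_k P,\,S')$ one gets $F^{*}(dS-P\,dX)=\mu_k\,(dS-P\,dX)$ exactly when $S'=\mu_k S+\tfrac12\mu_k^{2}\tau\norm{P}^{2}$, and with the identification $W_k=\mu_k(X_k-X_{k-1})/\tau$ (note the rescaling is $k$--dependent, which you should make explicit) the composition does reproduce NAG. Reverse--engineering the $S$--update from the conformal condition is exactly the right instinct.

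The gap is that your non--contact factor $\mathcal{G}_\tau:(X,P)\mapsto\bigl(X-\tau\nabla f(X),\,P-\nabla f(X)\bigr)$ is not a gradient descent step: it kicks the momentum. In momentum coordinates this kick is unavoidable --- without it $W$ would decay geometrically and never see $f$ --- so no choice of $S$--update can repair it; this is precisely the difficulty that the paper's look--ahead parametrization is designed to sidestep. To salvage your route you would have to either absorb the kick $P\mapsto P-\nabla f(X)$ (which, suitably normalized and with the matching $S$--update, is itself the flow of the contact Hamiltonian $f(X)$) into a second contact factor --- but since the kick and the position update do not commute, this yields a three--factor decomposition ``contact $\circ$ gradient $\circ$ contact'' rather than the stated two--factor one --- or change variables as the paper does. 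Finally, at $k=1$ the conformal factor $\frac{k-1}{k+2}$ vanishes, so the ``contact'' factor is degenerate there; you flag this boundary case and the paper does not, but it affects both arguments equally.
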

\begin{proof}
%\textcolor{red}
First we recall from Definition~\ref{def:contacttransform} 
that a contact transformation for the contact structure given by~\eqref{eq:H1form} is a map that satisfies
	\begin{equation}\label{eq:contactmap}
	dS_{k+1}-\frac{1}{2}P_{k+1}dX_{k+1}+\frac{1}{2}X_{k+1}dP_{k+1}=f(X_{k},P_{k},S_{k})\left(dS_{k}-\frac{1}{2}P_{k}dX_{k}+\frac{1}{2}X_{k}dP_{k}\right)\,,
	\end{equation}
for some function $f(X_{k},P_{k},S_{k})$ that is nowhere $0$.
Then one can directly verify that NAG can be exactly decomposed in the contact state space as the composition of the map
	\begin{align}
	X_{k+1}&=P_k\\
	P_{k+1}&=X_{k+1}+\frac{k-1}{k+2} (X_{k+1}-X_{k})\,,\label{eq:C1b}\\
	S_{k+1}&=\frac{k-1}{k+2}S_{k}\,,\label{eq:Sbis}
	\end{align}
which is readily seen to be a contact transformation satisfying
	\begin{align}
	dS_{k+1}-\frac{1}{2}P_{k+1}dX_{k+1}+\frac{1}{2}X_{k+1}dP_{k+1}
				&=\frac{k-1}{k+2}\left[dS_{k}-\frac{1}{2}P_{k}dX_{k}+\frac{1}{2}X_{k}dP_{k}\right]\,,\label{eq:contactNAG}
	\end{align}
 followed 
by a standard gradient descent map, 
	\begin{align}
	X_{k+1}&=X_{k}-\tau\nabla f(X_{k})\label{eq:C1a}\\
	P_{k+1}&=P_{k}\,\label{eq:C2b}\\
	S_{k+1}&=S_{k}\,.
	\end{align}
\end{proof}

Now we come to the main point of our work: 
in order to further illustrate the utility of contact transformations
in a case of great current interest in the literature
(see e.g.~\cite{sun2020continuous,wilson2021lyapunov,francca2021optimization,duruisseaux2022variational,duruisseaux2022time}),
in the remainder of this section we focus on the Bregman dynamis and show how to use time--dependent contact
transformation in order to re--write the Bregman dynamics in its most general form in such a way that it is clear
that it is \emph{always} derived from a separable Hamiltonian, 
and is thus amenable of simple, geometric and explicit discretizations by direct splitting. 
%	Due to the current central importance of the Bregman dynamics in the system dynamical approach to optimization (see also the more recent~\cite{sun2020continuous,wilson2021lyapunov,francca2021optimization,duruisseaux2022variational,duruisseaux2022time}), 
%	in the remainder of this work we focus on this dynamics.
%	in order to simplify the Bregman Hamiltonian. 
	
	As a first step, we need to introduce %a geometric representation of time--dependent contact Hamiltonian systems, which will enable us to introduce
	time--dependent
	contact transformations, which we do now following closely~\cite{bravetti2017contact},
%	\end{enumerate}
%	\end{remark}
%To define time--dependent contact transformations, 
%we proceed 
and proceeding analogously to the case of time--dependent canonical (symplectic) 
transformations usually encountered in classical mechanics~\cite{Arnold}. 
First, we extend the contact manifold $C$ to $C\times \mathbb R$  by including the time variable as a coordinate on $\mathbb R$;
then we also extend the contact
form $\eta$ to
	\begin{equation}\label{etaE}
	\eta^{E}=\eta+\cH(X,P,S,t)dt\,,
	\end{equation}
where $\cH(X,P,S,t)$ is the contact Hamiltonian of the system.
Recall as usual that locally we can think $C\simeq\mathbb R^{2n+1}$ and
%	\begin{equation}\label{etaE2}
	$\eta^{E}=dS-PdX+\cH(X,P,S,t)dt$.
%	\end{equation}
Then, we define a time--dependent contact transformation as follows.
	\begin{definition}[Time--dependent contact transformation]\label{def:tcontacttr} \rm
	A \emph{time--dependent contact transformation} for a system with contact Hamiltonian $\cH$ is a map
	$F:C\times \mathbb R\rightarrow C\times \mathbb R$
	such that 
	\begin{equation}\label{eq:TDCT1}
	F^{*}\eta^{E}=\sigma\,\eta^{E} \qquad \text{and} \qquad F^{*}t=t\,,
	\end{equation}
	where $\sigma:C\times \mathbb R\rightarrow  \mathbb R$ is a nowhere--vanishing function.
	\end{definition}
	\begin{remark}\rm
	In local 
	coordinates $(X,P,S,t)$ on $C\times \mathbb R$, this is equivalent to saying that a contact transformation maps to new coordinates
	$$\tilde X=\tilde X(X,P,S,t),\quad \tilde P=\tilde P(X,P,S,t),\quad \tilde S=\tilde S(X,P,S,t),\quad \tilde t=t\,,$$
	such that
	\begin{equation}\label{tcontact}
	%\tilde\eta^{E}=
	d\tilde S-\tilde Pd\tilde X+K(\tilde X,\tilde P,\tilde S,t)dt= \sigma(X,P,S,t)\left(dS-PdX+\cH(X,P,S,t)dt\right)\,,
	\end{equation}
	where $K(\tilde X,\tilde P,\tilde S,t)$ is the new contact Hamiltonian in the new coordinates.
	\end{remark}
It follows directly from~\eqref{tcontact}, that a necessary condition for a time--dependent transformation to be contact is
	\begin{equation}\label{HKcondition}
	\frac{\partial \tilde S}{\partial t}-\tilde P\frac{\partial\tilde X}{\partial t}+K=\sigma\,\cH\,,
	\end{equation}
that is, the new contact Hamiltonian is found from the original one according to 
	\begin{equation}\label{HKcondition2}
	K=\sigma\,\cH-\frac{\partial \tilde S}{\partial t}+\tilde P\frac{\partial\tilde X}{\partial t}\,.
	\end{equation}
Finally, it can be proved that the time--dependent contact Hamiltonian equations are preserved under the above transformations 
if and only if 
	\begin{equation}\label{eq:TDCTcondition}
	-\sigma\frac{\partial K}{\partial \tilde S}=-\sigma \frac{\partial \cH}{\partial S}-d\sigma(X_{\cH}^{E})\,,
	\end{equation}
where $X_{\cH}^{E}=X_{\cH}+\partial_{t}$	
	(see~\cite{bravetti2017contact} for more details).
%\todo[inline]{we should define $X_{\cH}^E$}
%\tr{
%\subsection{Simplifying the Bregman Hamiltonian}\label{sec:SimpleBH}
%}

Now we can use the above results to simplify the Bregman Hamiltonian. In particular, we construct
a time--dependent contact transformation that brings $H_{\ttt{Br}}$ to a Hamiltonian 
$K_{\ttt{Br}}$ which is equivalent to the original one but separable, and hence directly amenable to 
an explicit discretization by splitting.
Let us consider the transformation
	\begin{equation}\label{ctr1}
	\tilde X=X,\quad \tilde P=e^{-\gamma}p+\nabla h(X),\quad \tilde S=e^{-\gamma} S+h(X),\quad \tilde t=t\,,
	\end{equation}
with $h(X)$ the same convex function as in~\eqref{eq:BH}.
We have the following
	\begin{lemma}\rm
	For any contact Hamiltonian system with Hamiltonian $\cH$,
	the transformation~\eqref{ctr1} is not canonical (symplectic) but it is contact, with 
	\begin{equation}\label{sigmaK}
	\sigma(X,P,S,t)=e^{-\gamma},\qquad
	K=e^{-\gamma}\left(\cH+\dot\gamma S\right)\,.
	\end{equation}
	\end{lemma}
	\begin{proof}
	The proof proceeds by directly calculating the differential of the transformation~\eqref{ctr1}.
	In this way we obtain that 
	$$d\tilde P\wedge d\tilde X=e^{-\gamma}dP\wedge dX\,,$$
	showing that it is not a canonical (symplectic) transformation. Moreover, one can also verify that
	$$d\tilde S-\tilde Pd\tilde X+K(\tilde X,\tilde P,\tilde S,t)dt= e^{-\gamma}\left(dS-PdX+\cH(X,P,S,t)dt\right)\,,$$
	and that~\eqref{eq:TDCTcondition} holds for $K$ as in~\eqref{sigmaK}, showing that indeed it is a time--dependent contact transformation 
	that preserves the dynamics.
	\end{proof}

Now we are finally ready to prove the second main result of our work.
	\begin{theorem}[The Bregman Hamiltonian is always separable]\label{th:Kbr}\rm
	Under the ideal scaling condition $\dot\gamma=e^{\alpha}$, 
	the generally non--separable Bregman Hamiltonian
	\begin{equation}\label{eq:BHbelow}
	H_{\ttt{Br}}(X,P,t)=\e^{\alpha+\gamma}\left(D_{h^*}(\e^{-\gamma}P+\nabla h(X),\nabla h(X))+\e^{\beta}f(X)\right)\,,
	\end{equation}
	is equivalent to the separable contact Hamiltonian
	\begin{equation}\label{eq:BK3}
	K_{\ttt{Br}}(\tilde X,\tilde P,\tilde S,t)=\e^{\alpha}\left(h^{*}(\tilde P)-\langle\tilde P,\tilde X\rangle+\e^{\beta}f(\tilde X)+\tilde S\right)\,,
	\end{equation}
	for any choice of the convex function $h(X)$.
	\end{theorem}
	\begin{proof}
We start by applying the above contact transformation~\eqref{ctr1} 
to the Bregman Hamiltonian~\eqref{eq:BHbelow} and use that $\dot\gamma=e^{\alpha}$ (ideal scaling condition from~\cite{wibisono2016variational})
to obtain the new Bregman Hamiltonian in the new coordinates, which, using~\eqref{sigmaK}, reads
	\begin{equation}\label{eq:BK}
	K_{\ttt{Br}}(\tilde X,\tilde P,\tilde S,t)=\e^{\alpha}\left(D_{h^*}(\tilde P,\nabla h(\tilde X))+\e^{\beta}f(\tilde X)+\tilde S-h(\tilde X)\right)\,.
	\end{equation}
Interestingly, by the very definition of the Bregman divergence, Equation~\eqref{eq:BD}, we can rewrite the above expression as
	\begin{equation}\label{eq:BK2}
	K_{\ttt{Br}}(\tilde X,\tilde P,\tilde S,t)=\e^{\alpha}\left(h^{*}(\tilde P)-h^{*}(\nabla h(\tilde X))-\langle\tilde P,\tilde X\rangle+\langle\nabla h(\tilde X),\tilde X\rangle+\e^{\beta}f(\tilde X)+\tilde S-h(\tilde X)\right)\,.
	\end{equation}
Moreover, by definition of the Legendre transform, 
	$$h^{*}(\nabla h(\tilde X))=\langle\nabla h(\tilde X),\tilde X\rangle-h(\tilde X)\,,$$ 
and thus some of the terms in~\eqref{eq:BK2} cancel, to give
the sought--for final result~\eqref{eq:BK3}.
	\end{proof}
	
	%	\begin{remark}
%\tr{
%\begin{remark}\rm
%In principle it is possible to further simplify the Bregman Hamiltonian~\eqref{eq:BK3}. 
%Indeed, using the (time--independent) contact transformation
%$$X'=-\tilde P\,,\qquad P'=\tilde X\,,\qquad S'=\tilde S-\langle \tilde P,\tilde X\rangle\,,$$
%the Bregman Hamiltonian~\eqref{eq:BK3} takes the form
%\begin{equation}\label{eq:BK4}
%    K_{\ttt{Br}}(X',P',S',t)=\e^{\alpha}\left(h^{*}(-X')+\e^{\beta}f(P')+S'\right)\,.
%\end{equation}
%This characterizes the Bregman family as
%the most general separable contact Hamiltonian
%systems with Hamiltonian that is the sum of three pieces, each of which depends only on $X'$, $P'$ and $S'$ respectively,
%and with a linear dependence on $S'$.
%\end{remark}
%}

We remark at this point that non--separable Hamiltonians systems are notoriously hard to discretize, 
since a direct application of standard symplectic (or contact)
integrators leads to implicit algorithms. 
To bypass this problem for the Bregman dynamics, in~\cite{francca2020dissipative}
the authors suggested to use a technique first proposed in~\cite{tao2016explicit} that consists in doubling 
the phase space and then propose an ``augmented Hamiltonian'' on such space that is separable and thus can be integrated using standard explicit
symplectic integrators. However, it must be remarked that the geometric (pre--symplectic) 
character of such algorithms is only guaranteed in the doubled phase
space, not in the original one, and this signals that such procedure has to be handled with care.
Instead, using Theorem~\ref{th:Kbr}, we will be able to perform a
direct splitting that on the one side simplifies the 
algorithm and requires less computational burden, and on the other side guarantees structure--preservation
in the original phase space of the system.

\subsection{Relativistic Bregman algorithm}\label{subsec:RBA}
To illustrate the benefits of our approach and motivated by the arguments
in~\cite{francca2019conformal} on the advantage of choosing a ``relativistic'' kinetic term in the Hamiltonian
approach to optimization, 
we first define the \emph{Relativistic Bregman dynamics} as a motivating example of
a non--separable Bregman Hamiltonian that can be turned into an equivalent (but separable) one by means of
Theorem~\ref{th:Kbr}.
Afterwards, by a direct splitting we obtain the corresponding \emph{Relativistic Bregman algorithm} (RB).

Let us consider the case in which $H_{\ttt{Br}}$ in~\eqref{eq:BH} is generated by the convex function
	\begin{equation}\label{eq:relativistich}
	h(X)=-mv^{2}\sqrt{1-\frac{\norm{X}^{2}}{v^{2}}}\,,
	\end{equation}
where this function is inspired in the relativistic Lagrangian for a particle of mass $m$ and 
$v$ is the analogue of the speed of light. %~\cite{mechanics}.
Moreover, let us fix the functions 
    $$\alpha=\log c-\log t, \qquad \beta=c\log t +\log C, \qquad  \gamma=c\log t, \qquad c,C>0\,, $$ 
which satisfy 
the ideal scaling conditions~\eqref{eq:IS1} and \eqref{eq:IS2}.
With this choice, the Bregman dynamics is guaranteed to have a polynomial convergence rate of order $c$,~\cite{wibisono2016variational}.

From~\eqref{eq:relativistich} and~\eqref{eq:BH} we obtain the \emph{Relativistic Bregman Hamiltonian}
	\begin{equation}
	H_{\ttt{Br}}^{R}={\small\e^{\alpha+\gamma}\left[v\sqrt{\norm{\e^{-\gamma}P+\frac{mX}{\sqrt{1-\frac{\norm{X}^{2}}{v^{2}}}}}^{2}+m^{2}v^{2}}
			-\frac{mv^{3}}{\sqrt{v^{2}-\norm{X}^{2}}}-\e^{-\gamma}\langle P,X\rangle+\e^{\beta}f(X)\right]},\label{eq:HBrRel}
	\end{equation}
which is clearly non--separable.
However, using~\eqref{eq:BK3} we get the much simpler but equivalent
	\begin{equation}\label{eq:KBrRel}
	K_{\ttt{Br}}^{R}=\e^{\alpha}\left(v\sqrt{\norm{P}^{2}+m^{2}v^{2}}-\langle  P, X\rangle+\e^{\beta}f(X)+S\right)\,,
	\end{equation}
where, from now on, we drop the notation with a 
tilde above the new coordinates in~\eqref{eq:KBrRel}, as there will be no problem of confusion.
We emphasize that the Relativistic Bregman Hamiltonian~\eqref{eq:HBrRel} is different from the Relativistic
Gradient Descent proposed in~\cite{francca2019conformal}. Indeed, although both are defined using a relativistic kinetic term, the
former belongs to the Bregman family of Hamiltonian functions, while the latter does not.

In~\cite{francca2020dissipative} in order to integrate Hamilton's equation stemming from a non--separable Hamiltonian like~\eqref{eq:HBrRel},
the authors suggest to use the technique first proposed in~\cite{tao2016explicit}, 
according to which one first doubles the phase space dimensions to a space 
with coordinates $(X,P,t,\bar X,\bar P,\bar t)$, and then defines the \emph{augmented Hamiltonian}
	\begin{equation}\label{eq:AH}
	H(X,P,t,\bar X,\bar P,\bar t)=H_{\ttt{Br}}^R(X,\bar P,t)+H_{\ttt{Br}}^R(\bar X, P,\bar t)+\frac{\xi}{2}\left(\norm{X-\bar X}^{2}+
	\norm{P-\bar P}^{2}\right)\,,
	\end{equation}
where $\xi>0$ is a free parameter whose value controls the strength of the last term, that is included in order to bias the dynamics towards
$X=\bar X$ and $P=\bar P$, and whose value has to be tuned in practice. 
The equations of motion for the augmented Hamiltonian are equivalent to those of the original system
when $X=\bar X,P=\bar P$, and $t=\bar t$ and the Hamiltonian is now separable and thus one can integrate the dynamics using a splitting scheme.

On the other hand, the Hamiltonian $K_{\ttt{Br}}^{R}$ in~\eqref{eq:KBrRel} 
can be splitted directly in the original contact phase space, with coordinates
$(X,P,S)$ (and time). For instance, using the splitting
	\begin{equation}
K_{A}:=e^{\alpha} v \sqrt{\norm{P}^{2}+v^{2} m^{2}}, \quad K_{B}=-e^{\alpha} \langle X, P\rangle, \quad K_{C}=e^{\alpha+\beta} f(X),\quad K_{D}=e^{\alpha}S
	\end{equation}
we get the maps
$$
\begin{aligned}
%\begin{array}{c}
&\varphi_{\tau}^{A}\left(\begin{array}{c}
X \\
P \\
S
\end{array}\right)=\left(\begin{array}{c}
X+\frac{e^{\alpha} v\, P}{\sqrt{\norm{P}^{2}+v^{2} m^{2}}} \tau \\
P \\
S+e^{\alpha} v\left(\frac{-v^{2}m^{2}}{\sqrt{\norm{P}^{2}+v^{2} m^{2}}}\right) \tau
\end{array}\right) \\
&\varphi_{\tau}^{B}\left(\begin{array}{c}
X \\
P \\
S
\end{array}\right)=\left(\begin{array}{c}
X \exp \left(-e^{\alpha} \tau\right) \\
P \exp \left(e^{\alpha} \tau\right) \\
S
\end{array}\right) \\
&\varphi_{\tau}^{C}\left(\begin{array}{c}
X \\
P \\
S
\end{array}\right)=
\left(\begin{array}{c}
X \\
-e^{\alpha+\beta} \nabla f(X) \tau+P \\
-e^{\alpha+\beta} f(X) \tau+S
\end{array}\right)\\
&\varphi_{\tau}^{D}\left(\begin{array}{c}
X \\
P \\
S
\end{array}\right)=\left(\begin{array}{c}
X \\
P \exp \left(-e^{\alpha} \tau\right) \\
S \exp \left(-e^{\alpha} \tau\right)
\end{array}\right)
%\end{array}
\end{aligned}
$$
and, using the composition 
 $$\varphi_{\tau / 2}^{D} \circ \varphi_{\tau / 2}^{B}\circ \varphi_{\tau / 2}^{C} \circ \varphi_{\tau}^{A} 
 \circ \varphi_{\tau / 2}^{C} \circ \varphi_{\tau / 2}^{B} \circ \varphi_{\tau / 2}^{D}\,,$$
we obtain a second--order contact integrator %$\varphi_{\tau}^{K}$
(we refer to e.g.~\cite{bravetti2020numerical} for the study of contact integrators derived by splitting.
See also~\cite{goto2021fast} for a different but related approach to optimization using these types of algorithms).
Clearly this integrator
yields an optimization algorithm
that we shall call the \emph{Relativistic Bregman algorithm} (RB).

We remark on the most important difference between 
the strategy put forward in~\cite{francca2020dissipative}
as compared to ours: while the integrators proposed in~\cite{francca2020dissipative} for the ``non--separable'' case
are geometric 
(pre--symplectic)
only in the enlarged phase space, thus not guaranteeing that such property holds in the original phase space of the system, 
the RB that we have just described, as well as any other algorithm based on the use of contact splitting
integrators after Theorem~\ref{th:Kbr}, are structure--preserving in the original (contact) phase space. 
Therefore we expect the RB to be both more stable and efficient, with direct benefits for the optimization task, 
especially as the dimension of the problem increases.
The numerical simulations reported in~\cite{Daza-Torres2022_GitHub} provide evidence for these conclusions.

\section{Numerical experiments}\label{sec:numerics}
The purpose of this section is just illustrative: we aim to show that the 
Relativistic Bregman algorithm (RB) proposed above can effectively be used in benchmark
optimization and machine learning tasks. 
To show this point, we report here the performance of RB on some reference examples
and present a comparison
with its Euclidean counterpart, namely the \emph{Euclidean Bregman algorithm} (EB), and two 
standard optimization algorithms such as NAG and CM.
We refer to~\cite{Daza-Torres2022_GitHub} for the details of the EB and for a much larger comparison
with further test functions and algorithms.

In all the following examples we set $P_0=0$ and $S_0=0$ whenever such initial conditions are required.

\begin{example}[Quadratic function] \rm
Let us start with a simple quadratic function
\begin{equation}\label{eq:quad}
f(X)=\frac{1}{2}X^T AX,\quad X \in \mathbb{R}^{500},\quad \lambda(A)\sim \mathcal{U}(10^{-3},1),
\end{equation}
where $A \in \mathbb{R}^{500 \times 500}$ is a positive--definite random matrix with eigenvalues 
uniformly distributed over the range $[10^{-3},1].$ 
In Figure~\ref{fig:ex_quadratic}(a), we compare the performance of EB and RB on this problem using  
initial condition $X_0=(1,1,...,1)$, 
step size $\tau=10^{-4}$, speed of light $v=1000$, mass $m=10^{-3}$, and varying $c \in \{ 2, 4, 8\}$. 
For these parameters, both RB and EB exhibit similar rates of convergence,
which are close to the theoretical ones, see Table~\ref{tab:quadratic_convrates}. 

\begin{figure}
\subfigure[\bf{Quadratic function}]{\includegraphics[scale=0.35]{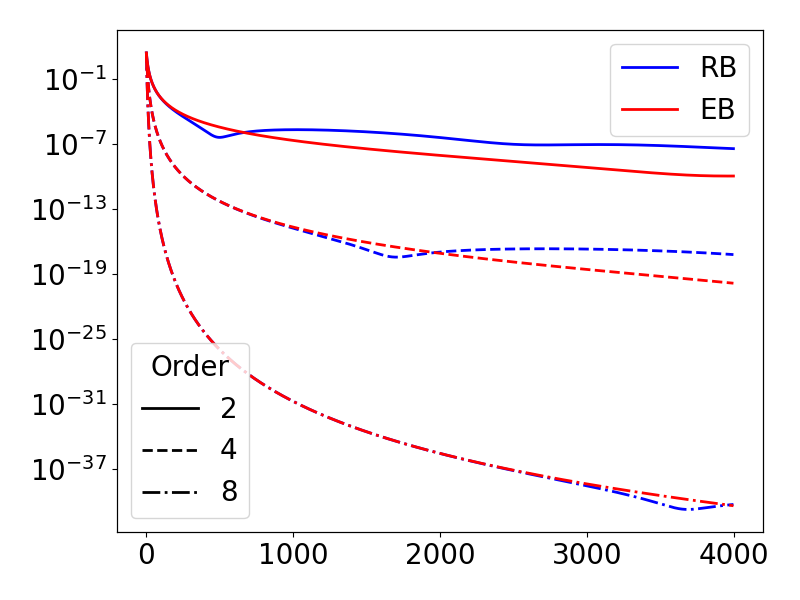}}
\subfigure[\bf{Quartic function}]{\includegraphics[scale=0.35]{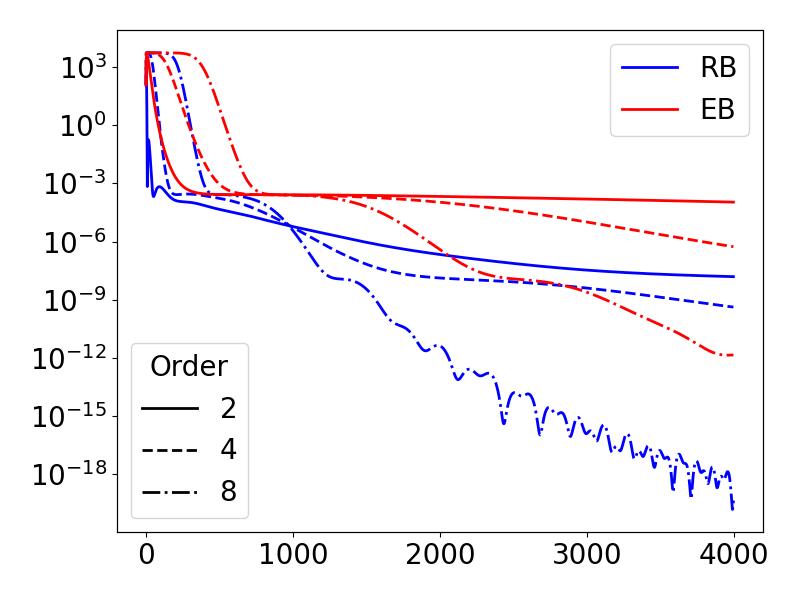}}
\caption{Comparison of the performance of the RB and EB methods, both with the same parameters, and for varying $c \in \{2,4,8\}.$}
\label{fig:ex_quadratic}
\end{figure}

\begin{table}[h!]
\centering
\renewcommand{\arraystretch}{1.5}
\begin{tabular}{|c|c c c |c c c |c c c|} \hline
\textbf{Order} & \multicolumn{3}{c|}{ $\mathbf{c=2}$} & \multicolumn{3}{c|}{$\mathbf{c=4}$} & \multicolumn{3}{c|}{$\mathbf{c=8}$}\\ \hline
Iterations & $10^3$ & $2\times10^3$ & $4\times10^3$ & $10^3$ & $2\times10^3$ & $4\times10^3$ & $10^3$ & $2\times10^3$ & $4\times10^3$ \\ \hline
RB  & $t^{-1.75}$ & $t^{-1.91}$ & $t^{-2.03}$ & $t^{-4.99}$ & $t^{-4.80}$ & $t^{-4.84}$ &$t^{-10.25}$ & $t^{-10.77}$ & $t^{-10.84}$\\
EB  & $t^{-2.26}$ & $t^{-2.59}$ & $t^{-2.37}$ & $t^{-4.89}$ & $t^{-5.19}$ & $t^{-5.67}$ &$t^{-10.25}$ & $t^{-10.76}$ & $t^{-11.21}$\\ \hline
\end{tabular}
\caption{Numerical convergence rates of the RB and EB algorithms for the Quadratic function~\eqref{eq:quad} achieved at 1000, 2000, and 4000 iterations
respectively.}
\label{tab:quadratic_convrates}
\end{table}
\end{example}

\begin{example}[Quartic function]\rm
Next let us consider the quartic function
\begin{equation}\label{eq:quart}
f(X)=[(X-1)^T \Sigma (X-1)]^2, \quad X \in \mathbb{R}^{50},\quad \Sigma_{ij}=0.9^{|i-j|}. 
\end{equation}
This convex function achieves its minimum value $0$ at $X^*=1$. 
In Figure~\ref{fig:ex_quadratic}(b), we compare the performance of EB and RB on this problem using 
initial condition $X_0 \sim \mathcal{U}(0,1)$, step size $\tau=10^{-3}$, and the rest of the parameters as in the previous example. 
In this case, RB shows better convergence rates than those of EB, being also closer to the theoretical ones, see Table~\ref{tab:quartic_convrates}.
%is sampled uniformly at random in the range $[0,1]$
%we compare the performance of EB and RB on this problem using initial condition $X_0=(1,1,...,1)$, step size $\tau=10^{-4}$, speed of light $v=1000$, mass $m=10^{-3}$, $C=1$, and varying $c \in \{ 2, 6, 8\}$. For these parameters, both RB and EM exhibit similar rates of convergence. The RB and EB algorithms achieve convergence orders close to the theoretical ones, see Table~\ref{tab:quadratic_convrates}. 

%\begin{figure}[!h]
%\includegraphics[scale=0.35]{}
%\includegraphics[scale=0.35]{figures/ex_comp_Quartic.png}
%\caption{Comparison of the performance of the RB and EB methods on the Quartic objective function, both with the same parameters, and for varying $c \in \{2,4,8\}.$}
%\end{figure}

\begin{table}[h!]
\centering
\renewcommand{\arraystretch}{1.5}
\begin{tabular}{|c|c c c |c c c |c c c|}\hline
\textbf{Order} & \multicolumn{3}{c|}{ $\mathbf{c=2}$} & \multicolumn{3}{c|}{$\mathbf{c=4}$} & \multicolumn{3}{c|}{$\mathbf{c=8}$}\\ \hline
Iterations & $10^3$ & $2\times10^3$ & $4\times10^3$ & $10^3$ & $2\times10^3$ & $4\times10^3$ & $10^3$ & $2\times10^3$ & $4\times10^3$ \\ \hline
RB  & $t^{-2.07}$ & $t^{-2.29}$ & $t^{-2.28}$ & $t^{-2.17}$ & $t^{-2.47}$ & $t^{-3.01}$ &$t^{-2.34}$ & $t^{-3.86}$ & $t^{-5.64}$\\
EB  & $t^{-1.20}$ & $t^{-1.16}$ & $t^{-1.24}$ & $t^{-1.21}$ & $t^{-1.35}$ & $t^{-2.03}$ &$t^{-1.22}$ & $t^{-2.30}$ & $t^{-3.69}$\\ \hline
\end{tabular}
\caption{Numerical convergence rates of the RB and EB algorithms for the Quartic function~\eqref{eq:quart}, achieved at 1000, 2000, and 4000 iterations
respectively.}
\label{tab:quartic_convrates}
\end{table}
\end{example}

%RB 2 [-2.07, -2.29, -2.28] 4 [-2.17, -2.47, -3.01] 8 [-2.34, -3.86, -5.64]
%EB 2 [-1.20, -1.16, -1.24] 4 [-1.21, -1.35, -2.03] 8 [-1.22, -2.30, -3.69]

\begin{example}[Machine Learning examples]\rm
Now we test the RB in two classical machine learning tasks,
and compare its performance with those of EB, CM and NAG. 
We use two popular datasets for classification from the UCI machine learning repository and 
try to fit a Two-regularized logistic regression model. 
The profiles of these datasets are summarized in Table \ref{tab:ML_data}. 
We set the regularization parameter for all methods to be $\lambda_{reg}=10^{-2}$; 
for the RB and EB algorithms, we set $v=1000$, $m=10^{-3}$, $C=1$, and $c=2$. 
The rest of the parameters were tuned in the validation dataset.

\begin{table}[!h]
\centering
\begin{tabular}{lcc} \hline
Dataset  &  Diabetes  &  Breast Cancer\\ \hline
Train       &  460  &  340 \\
Test        &  192  &  143\\
Validation  &  115  & 86 \\
Feat        &  8  &  6\\
Class       &  2  &  2 \\
\hline  
\end{tabular}
\caption{Profiles of the datasets.}
\label{tab:ML_data}
\end{table}

\subsubsection*{Diabetes}
In the first example, we use the Pima Indians Diabetes dataset. 
The objective is to predict, based on diagnostic measurements, whether a patient has diabetes. 
The dataset was separated into training 460, test 192, and validation 115 sets.  
Figure~\ref{fig:ml_diabetes} shows the mean over 50 random initial conditions, the 0.025 and 0.975 quantiles of the loss function, 
and the classification error evaluated in the test set. 
The RB algorithm converges faster to the optimum, which is reached by the other algorithms a few iterations later. 
The RB and EB algorithms showed less sensitivity to the initial condition compared to CM and NAG.
\begin{figure}[h!]
\centering
\subfigure[]{\includegraphics[scale=0.28]{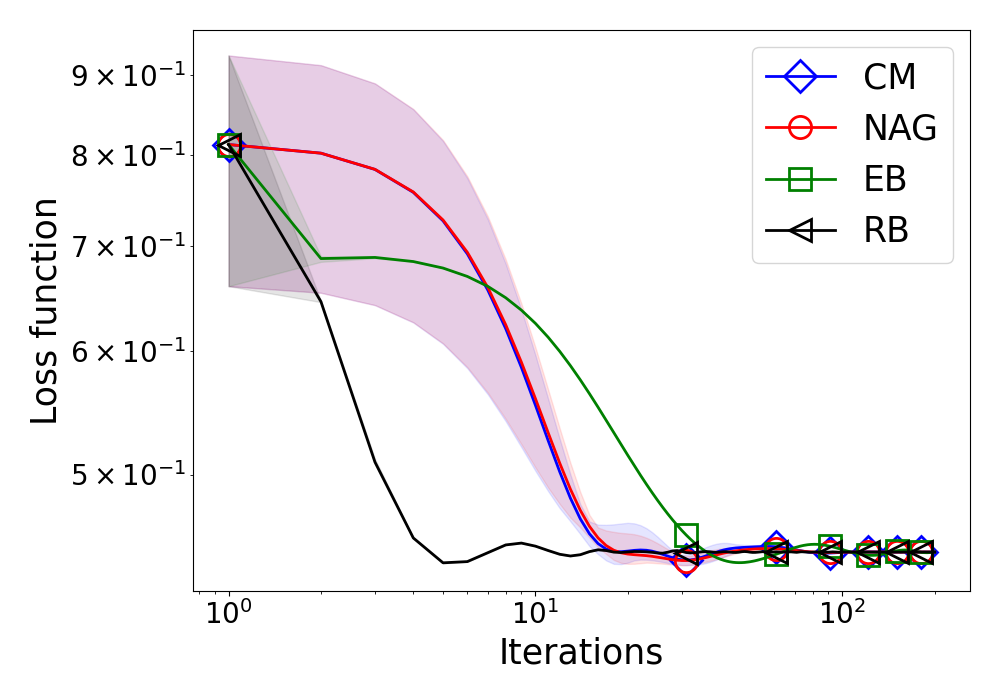}}
\subfigure[]{\includegraphics[scale=0.28]{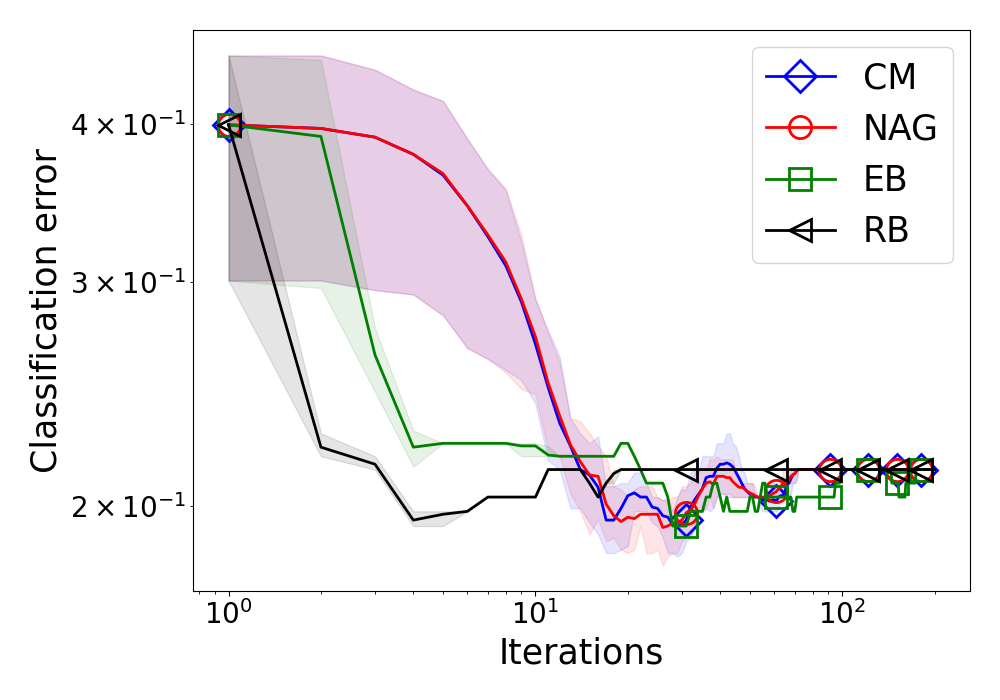}}
%\subfigure[]{\includegraphics[scale=0.28]{}}
%\subfigure[]{\includegraphics[scale=0.28]{}}
\caption{Two-regularized logistic regression model trained with RB, EB, CM, and NAG on the Pima Indians Diabetes dataset. 
(a) Objective function values and 
(b) classification error along with the iteration of optimization on log--log scale. 
The initial conditions are taken uniformly on $(0,1)$.} %, $X_{0,i}\sim \mathcal{U}(0,1)$, $i=1\ldots 50$.
\label{fig:ml_diabetes}
\end{figure}

\subsubsection*{Breast Cancer}
The objective is to classify breast cancer from some features.  
The dataset was separated into training 460, test 192 and validation 115 sets.  
The RB algorithm exhibits similar behavior as in the previous example, converging faster than the other algorithms in the first few iterations, see Figure~\ref{fig:ml_cancer}.
\begin{figure}[h!]
\centering
%\subfigure[]{\includegraphics[scale=0.28]{}}
%\subfigure[]{\includegraphics[scale=0.28]{}}
\subfigure[]{\includegraphics[scale=0.28]{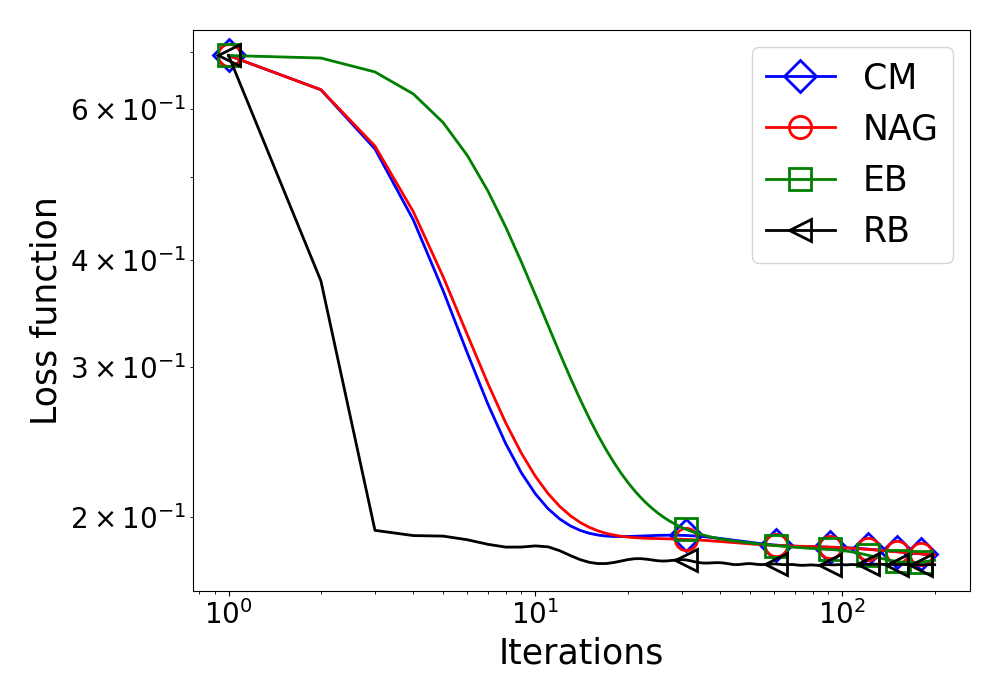}}
\subfigure[]{\includegraphics[scale=0.28]{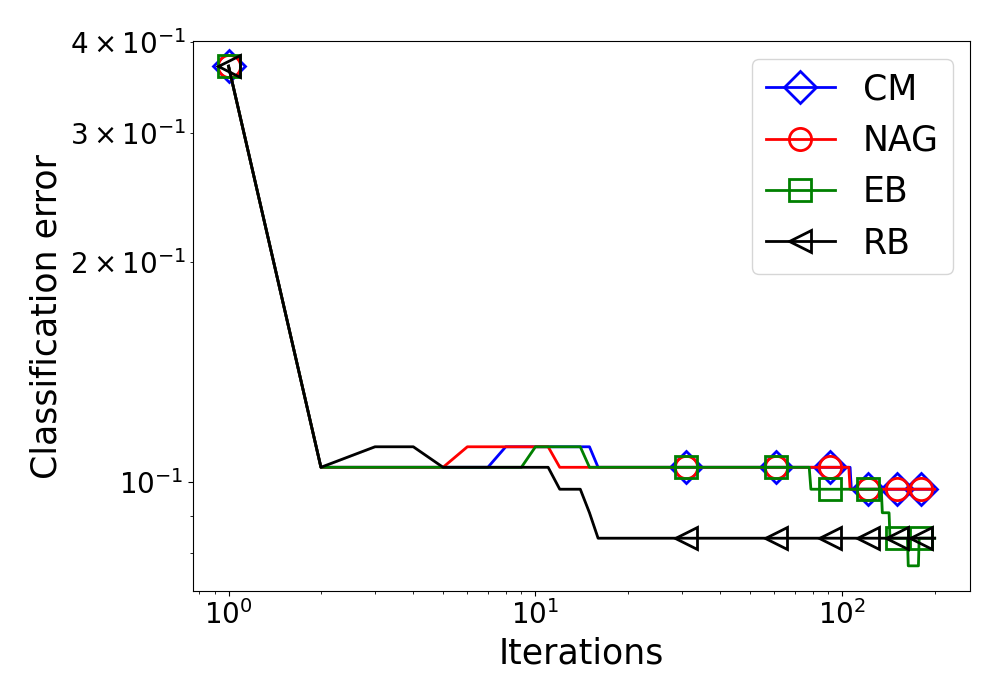}}
\caption{Two-regularized logistic regression model trained with RB, EB, CM, and NAG on the Breast Cancer dataset. 
(a) Objective function values and 
(b) classification error along with the iteration of optimization on a log--log scale. 
All the methods are initiated at the initial condition $X_0=0$.}
\label{fig:ml_cancer}
\end{figure}

\end{example}

\section{Conclusions}\label{sec:Conclusions}
Geometry is a powerful tool in pure and applied mathematics. 
Among other things, it enables to describe phenomena independently of the particular choice of coordinates.
This is because geometric objects are invariant under some group of transformations.
In this work, we have shown that
contact geometry and the related contact transformations can be extremely useful in optimization.
Indeed, we have shown that invariance under contact transformations is a common feature of all the recently--proposed dynamics
in the context of the dynamical systems approach to convex optimization (Proposition~\ref{prop:recover}). 
More importantly, we have proved that
the Bregman dynamics, which is perhaps the single most important recent finding in this context but whose implementation 
is seriously hindered by the fact that the Hamiltonian is non--separable, is actually always separable up to a contact transformation
(Theorem~\ref{th:Kbr}). This opens the way to applying explicit and fast numerical integration methods for simulating the Bregman dynamics, 
which in turn provides new efficient optimization algorithms, even when the original Hamiltonian looks non--separable.
Finally, in order to illustrate the relevance of considering algorithms that stem from non--separable Bregman Hamiltonians,
we have shown in some benchmark examples from the optimization and machine learning literature that the thus--proposed 
Relativistic Bregman algorithm compares favorably with
respect to the standard Classical Momentum and Nesterov's Accelerated Gradient algorithms, and also to the Euclidean Bregman algorithm.

In future work, we plan to perform a more in-depth analysis of the properties of the Relativistic Bregman algorithm.
Moreover, as commented in the text, it will be interesting to study several optimization dynamics through the lens of
Herglotz's variational principle. Finally, all the results presented here have a natural generalization to the case of
general differentiable manifolds,
and therefore we can extend our analysis with methods similar to those employed 
in~\cite{francca2021optimization,duruisseaux2022time,duruisseaux2022variational}.
% Acknowledgements should go at the end, before appendices and references

\section*{Data availability statement}
The datasets analysed  and all the codes used for the study  are available at
\href{https://github.com/mdazatorres/Breg\_dynamic\_contact\_algorithm}{https://github.com/mdazatorres/Breg\_dynamic\_contact\_algorithm}.

%
%\section*{Conflict of interest statement}
%On behalf of all authors, the corresponding author states that there is no conflict of interest.
%

 %------------------------------------------------------------------------------------------------------------------------------------------------------
%------------------------------------------------------------------------------------------------------------------------------------------------------
%------------------------------------------------------------------------------------------------------------------------------------------------------

\bibliographystyle{abbrvnat_mv}
\bibliography{contact.bib}

%------------------------------------------------------------------------------------------------------------------------------------------------------
%------------------------------------------------------------------------------------------------------------------------------------------------------
%------------------------------------------------------------------------------------------------------------------------------------------------------

\end{document}